\newtheorem*{rep@theorem}{\rep@title}
\newcommand{\newreptheorem}[2]{%
\newenvironment{rep#1}[1]{%
 \def\rep@title{#2 \ref{##1}}%
 \begin{rep@theorem}}%
 {\end{rep@theorem}}}
\theoremstyle{plain}
\newtheorem{theorem}{Theorem}
\newtheorem{proposition}[theorem]{Proposition}
\newtheorem{lemma}[theorem]{Lemma}
\theoremstyle{definition}
\newtheorem{definition}[theorem]{Definition}
\newcommand{\proves}{\vdash}
\renewcommand{\theta}{\vartheta}
\renewcommand{\phi}{\varphi}
\newcommand{\V}{\mathcal{V}}
\newcommand{\LTLI}{$(\diam,\X,\I)$}
\newcommand{\SF}[1]{\mathbf{S1S}(#1)} 
\newcommand{\diam}{\lozenge}
\newcommand{\N}{\mathbf{X}}
\newcommand{\X}{\N}
\newcommand{\I}{I}
\newcommand{\bvee}{\vee}
\newcommand{\bneg}{\neg}
\newcommand{\bbot}{\bot}
\newcommand{\btop}{\top}
\numberwithin{theorem}{section}
\newif\ifshowproofs
\newcommand{\auxproof}[1]{
\ifshowproofs
{\small \textit{Proof.}
{#1}
\qed}
\fi}
\title[Model-theoretic characterization of MSO on infinite words]{A model-theoretic characterization of \\ monadic second order logic on infinite words}
\author[Ghilardi and van Gool]{Silvio Ghilardi and Samuel J. van Gool}
\date{\today}		
\begin{document}

\begin{abstract}
Monadic second order logic and linear temporal logic are two logical formalisms that can be used to describe classes of infinite words, i.e., first-order models based on the natural numbers with order, successor, and finitely many unary predicate symbols.

Monadic second order logic over infinite words (S1S) can alternatively be described as a first-order logic interpreted in $\mathcal{P}(\omega)$, the power set Boolean algebra of the natural numbers, equipped with modal operators for `initial', `next' and `future' states.
We prove that the first-order theory of this structure
is the model companion of a class of algebras corresponding to a version of linear temporal logic (LTL) without until.

The proof makes crucial use of two classical, non-trivial results from the literature, namely the completeness of LTL with respect to the natural numbers, and the correspondence between S1S-formulas and B\"uchi automata. \\
\end{abstract}
\maketitle

\section{Introduction}

Monadic second order logic over the natural numbers with successor operation is a rather expressive, but still decidable formalism. The decision result, originally due to B\"uchi~\cite{Buc1962}, makes use of a conversion between logic and automata. The key idea is to view interpretations of unary predicates over natural numbers as infinite words over a suitable alphabet: one can associate an automaton with a formula, and vice versa, in such a way that, roughly speaking, the automaton accepts exactly those infinite words that, viewed as interpretations of second-order variables, satisfy the formula. 
Converting a formula $\phi$ into an automaton $A_\phi$ and then the automaton $A_\phi$ again into a formula,  one does not get back the same formula $\phi$, but a formula $\phi'$ which is equivalent to $\phi$ in the intended
model of the natural numbers.
Loosely speaking, one may view $\phi'$ as a kind of `normalization' of $\phi$. Morally, $\phi'$ is an existential formula; although this is not formally true,  the `existential nature' of $\phi'$ is rather evident. Nevertheless, in order to make $\phi'$ into an actual existential formula, an enlargement of the language is needed.

In this paper we make the above observations precise, which allows us to fit monadic second order logic into the framework of modern model-theoretic algebra (Robinson's legacy~\cite{Robinson}), using the concepts of model completeness and model companions. We briefly recall these concepts now; precise definitions are given in Section~\ref{sec:statement}.
Theories $T^*$ with the property that every formula is equivalent modulo $T^*$ to an existential (equivalently: to a universal) formula  are called \emph{model complete}; model complete 
theories $T^*$ can also be characterized semantically by requiring that all embeddings between models of  $T^*$ are elementary. Algebraically closed fields and real closed fields are classical examples of model complete theories (see, e.g., \cite{Whe1976} for many further examples). 
Moreover, as the previous paragraph indicates, monadic second order logic, viewed as the first-order theory of  $\mathcal{P}(\omega)$, is close to being model complete; we will see later in this paper (Section~\ref{sec:modelcomplete}) that it actually becomes model complete in an enriched language.
If $T$ is the set of universal consequences of a model complete theory $T^*$, then $T^*$ axiomatizes the class of structures which, as models of $T$, 
are \emph{existentially closed}: for the case of algebraically closed fields, $T$ is the theory of integral domains, for the case of real closed fields, $T$ is the theory of formally real fields, etc. 
In this situation, $T^*$ is called a \emph{model companion} of $T$. 

In this paper, we exhibit a natural enrichment of the language of monadic second order logic, viewed as the first-order theory of $\mathcal{P}(\omega)$, so that the theory becomes model complete. Moreover, we axiomatize the underlying universal fragment, i.e., the universal theory $T$ of which monadic second order logic over natural numbers is the model companion.
Our idea is to use \emph{modal and temporal logic}  to attain this goal. More precisely: we will use a variant of the modal logic LTL (`linear temporal logic'~\cite{Pnueli}) 
with an actual instant of time (i.e. with an atom for zero) and without until; i.e., we have a basic modal logic with a 
reachability operator and one additional constant.\footnote{
The reason for leaving out the Until operator is that we opted for as small a language as possible; Until is not needed for our purpose of expressing the B\"uchi acceptance condition. 
In fact, any operator definable in monadic second order logic over the natural numbers could be added to our $T$ 
(together with its axiomatization) without affecting our results.
} 
We introduce a universal class of algebras algebraizing this logic and we show that this class fits our purposes. 
Indeed, this class is axiomatized by a theory $T$ admitting
a model companion $T^*$ which is exactly monadic second order logic over the natural numbers (Theorem~\ref{thm:main}).  
In order to establish this result, we shall make use, besides automata techniques, of standard modal logic machinery, in particular, duality and filtrations~\cite{Esa1974,Gol1989,Gol1992}.

The paper is organized in the following way: in Section~\ref{sec:statement} we state our result, in the (rather technical) Section~\ref{sec:companion} we give the part of the proof requiring modal logic techniques (completeness of the axiomatization) and in 
Section~\ref{sec:modelcomplete} the part of the proof requiring automata theory ingredients (model completeness of the theory in the enriched language). Section~\ref{sec:conclusions} concludes and indicates directions for future work.

\section{Definitions and statement of the main theorem}\label{sec:statement}
In the following definition, we introduce the relevant class of algebras.
\begin{definition}\label{dfn:LTLIalgebra}
A \emph{\LTLI-algebra} is a tuple $(A,\vee,\neg,\bot,\diam,\X,I)$, where \\$(A, \vee, \neg, \bot)$ is a Boolean algebra, $\diam$ is a unary normal modal operator on $A$ (i.e., a $(\vee,\bot)$-semilattice endomorphism of $A$), $\X$ is a Boolean endomorphism of $A$, $I$ is an element of $A \setminus \{\bot\}$, and, for any $a \in A$, the following conditions hold:
\begin{multicols}{2}
\begin{enumerate}
\item $\diam a = a \vee \X \diam a$,
\item if $\X a \leq a$ then $\diam a \leq a$,
\item if $a \neq \bot$ then $I \leq \diam a$.
\item $\X I = \bot$.
\end{enumerate}
\end{multicols}
\end{definition}
The class of \LTLI-algebras (which is a \emph{universal} class) algebraizes a version of linear temporal logic without the until connective and with a constant $I$ for `initial element'. The structure of \LTLI-algebras will be studied in detail in Section~\ref{sec:companion}.

An important example of a \LTLI-algebra is the \emph{power set algebra of the natural numbers}, $\mathcal{P}(\omega)$, equipped with the usual Boolean operations, and the operations $\diam S := \{ n \in \omega \ | \ n \leq s \text{ for some } s \in S\}$, $\X S := \{ n \in \omega \ | \ n + 1 \in S\}$, and $I := \{0\}$. We will see in Section~\ref{sec:modelcomplete} that first-order formulas in the signature of \LTLI-algebras, interpreted in $\mathcal{P}(\omega)$, are interdefinable with formulas in the system S1S, monadic second order logic over the natural numbers with order and successor relations.

If $T$ and $T^*$ are first-order theories in the same signature, recall that $T^*$ is called a \emph{model companion} of $T$ if (i) the theories $T$ and $T^*$ have the same universal consequences (i.e., $T^*$ is a \emph{companion} or \emph{co-theory} of $T$) and (ii) any first-order formula is equivalent over $T^*$ to an existential formula (i.e., $T^*$ is {\it model complete}). The model companion of $T$ is unique if it exists, and in this case it is the theory of the existentially closed $T$-models \cite{Whe1976}. 

Our main theorem is the following.
\begin{theorem}\label{thm:main}
The first-order theory of the \LTLI-algebra $\mathcal{P}(\omega)$ is the model companion of the first-order theory of \LTLI-algebras.
\end{theorem}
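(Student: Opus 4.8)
The plan is to verify the two defining conditions of a model companion separately: that $T^*:=\mathrm{Th}(\mathcal{P}(\omega))$ and $T$, the first-order theory of \LTLI-algebras, have the same universal consequences, and that $T^*$ is model complete. Since the axioms of Definition~\ref{dfn:LTLIalgebra} are all universal, $T$ is a universal theory, so the first condition amounts to showing $\mathrm{Th}_\forall(T^*)=T$. As $\mathcal{P}(\omega)$ is itself a \LTLI-algebra we get $T\subseteq\mathrm{Th}_\forall(T^*)$ for free, and the content is the reverse inclusion. I would prove it contrapositively: a universal sentence $\forall\bar x\,\chi$ fails in some \LTLI-algebra exactly when some conjunction $\psi(\bar x)$ of equalities and inequalities between terms (a disjunct of a disjunctive normal form of $\neg\chi$) is satisfiable in a \LTLI-algebra, and it suffices to transfer any such finite satisfiable diagram to $\mathcal{P}(\omega)$. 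Abstractly this says every \LTLI-algebra lies in $\mathbf{ISP}_{\mathrm u}(\mathcal{P}(\omega))$, i.e.\ embeds into an ultrapower of $\mathcal{P}(\omega)$; concretely it is a satisfiability-transfer statement of exactly the kind the modal completeness theorem delivers.

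For this first part I would lean on the completeness of LTL without until with respect to the flow $(\omega,<)$ together with the finite model property obtained by filtration (the machinery announced for Section~\ref{sec:companion}). A satisfiable finite diagram $\psi(\bar a)$ is, read modally, a finite set of modal formulas (some asserted, some denied, with $\I$ marking the initial point) that is consistent with respect to the \LTLI-axioms; filtration collapses a witnessing model to a finite one, and completeness with respect to $\omega$ then realizes it as an assignment of subsets of $\omega$, i.e.\ as witnesses $\bar x$ in $\mathcal{P}(\omega)$. Hence $\mathcal{P}(\omega)\models\exists\bar x\,\psi$, so $\forall\bar x\,\chi$ fails in $\mathcal{P}(\omega)$ and is not in $\mathrm{Th}_\forall(T^*)$, as required.

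For model completeness I would show that every first-order formula in the \LTLI-signature is $T^*$-equivalent to an existential one, and here the B\"uchi correspondence is the engine. First, using that atoms (singletons $\{n\}$) are first-order definable in the algebra, first-order \LTLI-formulas over $\mathcal{P}(\omega)$ are interdefinable with S1S-formulas, set quantifiers matching element quantifiers and number quantifiers matching quantification over atoms (the interpretation promised in Section~\ref{sec:modelcomplete}). Given a formula $\phi(\bar p)$, pass to an equivalent S1S-formula, then to a B\"uchi automaton, and read ``$\phi$ holds'' as ``the automaton has an accepting run''. Encoding a run as a partition $(R_q)_{q\in Q}$ of $\omega$ by algebra elements, the existence of an accepting run becomes $\exists (R_q)_{q}\,\theta$, where $\theta$ is \emph{quantifier-free} in the \LTLI-signature: the partition and initial conditions are Boolean together with $\I\leq\bigvee_{q\text{ initial}}R_q$; each transition clause has the local form $R_q\wedge L_a\leq\bigvee_{(q,a,q')\in\delta}\X R_{q'}$ with $L_a$ a Boolean combination of $\bar p$; and, crucially, the B\"uchi acceptance ``an accepting state occurs infinitely often'' is the single equation $\diam\bigl(\bigvee_{q\in F}R_q\bigr)=\btop$, since $\diam S=\btop$ in $\mathcal{P}(\omega)$ exactly when $S$ is unbounded. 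Thus $\phi$ is equivalent over $\mathcal{P}(\omega)$ to the existential formula $\exists(R_q)_{q}\,\theta$; as $T^*$ is the complete theory of $\mathcal{P}(\omega)$, the universal closure of this biconditional lies in $T^*$, so the equivalence holds across all models of $T^*$ and $T^*$ is model complete.

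I expect the main obstacle to be the first part: turning the informal ``LTL is complete for $\omega$'' into a clean satisfiability-transfer for finite quantifier-free diagrams of the algebraic signature, correctly handling the constant $\I$ for the initial point and the interplay of the global axioms (ii) and (iii) with filtration --- this is the technical heart flagged for Section~\ref{sec:companion}. The conceptual crux of the second part, by contrast, is already visible above: it is precisely the observation that the otherwise second-order ``infinitely often'' acceptance condition collapses to the quantifier-free equation $\diam(\cdot)=\btop$, which is what makes the B\"uchi normal form genuinely existential in this small signature and explains why until is dispensable.
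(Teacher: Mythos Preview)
Your proposal is correct and follows essentially the same route as the paper: the companion part is obtained by a filtration argument transferring (un)satisfiability between arbitrary \LTLI-algebras and $\mathcal{P}(\omega)$ (the paper phrases this contrapositively as validity transfer, after first reducing arbitrary quantifier-free formulas to single equations $t=\top$ using axiom~(iii)), and model completeness is proved exactly as you outline, via S1S and B\"uchi automata with the acceptance condition rendered quantifier-free as $\diam(\cdot)=\top$. Your identification of the conceptual crux in each half matches the paper's own emphasis.
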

In the rest of this paper, we denote by $T$ the first-order theory of \LTLI-algebras, and by $T^*$ the first-order theory of the \LTLI-algebra $\mathcal{P}(\omega)$. 
In Section~\ref{sec:companion}, we prove that $T^*$ is a companion of $T$. In Section~\ref{sec:modelcomplete}, we prove that $T^*$ is model complete. Together, these two sections prove Theorem~\ref{thm:main}.

\section{$T^*$ is a companion of $T$}\label{sec:companion}
In this section, we prove that $T^*$ is a companion of $T$, i.e., that $T$ and $T^*$ have the same universal consequences.\\

{\bf Notation.} Throughout this section, we denote by $\mathcal{L} = \{\bvee,\bneg,\bbot,\diam,\X,\I\}$ the first-order signature of \LTLI-algebras. We employ the usual abbreviations $a \wedge b := \neg(\neg a \vee \neg b)$, $\Box a := \neg \diam \neg a$, $a \to b := \neg a \vee b$, and $a \leftrightarrow b := (a \to b) \wedge (b \to a)$.\\

We begin with a useful observation: in the theory $T$, we can reduce quantifier-free $\mathcal{L}$-formulas to single equations.
\begin{lemma}\label{lem:equation}
For any quantifier-free $\mathcal{L}$-formula $\phi$, there exists an $\mathcal{L}$-term $t$ such that $\phi$ is equivalent to $t = \btop$ in $T$.
\end{lemma}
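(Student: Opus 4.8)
The plan is to proceed by induction on the structure of the quantifier-free formula $\phi$, exploiting the underlying Boolean algebra together with the special axiom (iii) to absorb both the propositional connectives and the negation of equations into single equations of the form $t = \top$. Since equality is the only relation symbol of $\mathcal{L}$, every atomic $\mathcal{L}$-formula is an equation $s = u$ between $\mathcal{L}$-terms, and the base case is standard Boolean bookkeeping: in any Boolean algebra one has $s = u$ if and only if $(s \leftrightarrow u) = \top$, because $(s \to u) \wedge (u \to s) = \top$ is equivalent to the conjunction of $s \leq u$ and $u \leq s$. So the atomic case is handled by taking $t := s \leftrightarrow u$.

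For the inductive step it suffices to treat conjunction and negation, since these form a complete set of propositional connectives (disjunction then being obtained by De Morgan). Conjunction is immediate: if $\phi_i$ is equivalent in $T$ to $t_i = \top$ for $i = 1,2$, then $\phi_1 \wedge \phi_2$ is equivalent to $(t_1 \wedge t_2) = \top$, since $t_1 = \top$ together with $t_2 = \top$ says exactly that $t_1 \wedge t_2 = \top$.

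The crux is negation, and this is where the defining axioms of \LTLI-algebras are genuinely used. If $\phi$ is equivalent to $t = \top$, then $\neg\phi$ is equivalent to $t \neq \top$, that is, to $\neg t \neq \bot$. The key observation is that in every model of $T$ one has, for all $a$, that $a \neq \bot$ if and only if $I \leq \diam a$: the forward direction is precisely axiom (iii), while for the converse I would use that $\diam$ preserves $\bot$ (being a $(\vee,\bot)$-semilattice endomorphism) together with $I \neq \bot$, so that $a = \bot$ forces $\diam a = \bot$ and hence $I \not\leq \diam a$. Since $I \leq \diam a$ is in turn equivalent to $(I \to \diam a) = \top$ in the Boolean reduct, I would set $t_{\neg\phi} := I \to \diam \neg t$, and then $\neg\phi$ is equivalent to $t_{\neg\phi} = \top$ in $T$.

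I expect the negation step to be the main, and indeed the only, real obstacle: converting an inequation into an equation is impossible over arbitrary Boolean algebras, and it is exactly the presence of the distinguished element $I$ and axiom (iii) that make it possible here. The remaining pieces are routine, and a straightforward structural induction combining the atomic, conjunction, and negation cases then produces the desired term $t$ for an arbitrary quantifier-free $\phi$.
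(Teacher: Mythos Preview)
Your proof is correct and rests on the same key observation as the paper's, namely that in any \LTLI-algebra one has $a \neq \bot$ if and only if $I \leq \diam a$, i.e., if and only if $(I \to \diam a) = \top$. The organizational choices differ slightly: the paper first puts $\phi$ into disjunctive normal form, reduces each conjunction of literals to an equation $t_j = \top$ exactly as you do, and then handles the outer disjunction in one shot with the term $t := I \to \bigvee_j \Box t_j$ (using that $t_j = \top$ iff $\Box t_j = \top$, and the contrapositive via the same observation on $I$). You instead run a clean structural induction, treating disjunction via De~Morgan as $\neg(\neg\phi_1 \wedge \neg\phi_2)$, which yields a more nested but equally valid term. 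Your approach is arguably more modular; the paper's gives a more explicit closed form. Either way, the substantive content---axiom~(iii) together with $I \neq \bot$ and $\diam\bot = \bot$---is identical.
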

\begin{proof}
Observe that, for any element $w$ in a \LTLI-algebra, we have
\begin{align}
\tag{*}
 w \neq \btop \text{ if, and only if, } I \leq \diam \neg w \text{ if, and only if, } I \to \diam \neg w = \btop.
\end{align}

Let $\phi$ be a quantifier-free formula. We may assume that $\phi$ is a disjunction of formulas $\psi_1, \dots, \psi_n$, where each $\psi_j$ is a conjunction of literals, i.e., has the form
\[ r^j_1 \neq s^j_1 \text{ and } \dots \text{ and } r^j_k \neq s^j_k \text{ and } u^j_1 = v^j_1 \text{ and } \dots \text{ and } u^j_\ell = v^j_\ell,\]
where the $r^j_i$, $s^j_i$, $u^j_i$, $v^j_i$ are $\mathcal{L}$-terms. Define the $\mathcal{L}$-terms
\[t_j := \bigwedge_{i=1}^k (I \to \diam \neg(r^j_i \leftrightarrow s^j_i)) \wedge \bigwedge_{i=1}^\ell (u^j_i \leftrightarrow v^j_i),\quad t := I \to \bigvee_{j=1}^n \Box t_j.\]
Using (*) and basic facts about Boolean algebras, we obtain that $\psi_j$ is equivalent in $T$ to $t_j = \top$, so $\phi$ is equivalent in $T$ to the disjunction ($t_1 = \top$ or ... or $t_n = \top$). This disjunction, in turn, is equivalent in $T$ to $t = \top$, as can be proved easily using (*) again, together with the axioms $\Box \top = \top$ and $I \neq \bot$.
\auxproof{ 
Indeed, if $t_j = \top$ for some $1 \leq j \leq n$, then $\Box t_j = \top$ so $\bigvee_{j=1}^n \Box t_j = \top$. Conversely, if $t_j \neq \top$ for each $1 \leq j \leq n$, then by (*) $I \leq \diam \neg t_j$ for each $j$, so $I \leq \bigwedge_{j=1}^n \diam \neg t_j = \neg \left(\bigvee_{j=1}^n \Box t_j \right)$. Since $I \neq \bot$, we must have $I \not\leq \bigvee_{j=1}^n \Box t_j$.
}
\end{proof}

In order to prove that $T^*$ is a companion of $T$, since $T \subseteq T^*$, it suffices to prove that any universal formula that is valid in $\mathcal{P}(\omega)$ is valid in any \LTLI-algebra. Therefore, by Lemma~\ref{lem:equation}, it is enough to prove the following theorem.

\auxproof{
Since $T \subseteq T^*$, any quantifier-free formula that is provable from $T$ is provable from $T^*$. For the converse, it suffices, by Lemma~\ref{lem:equation}, to prove for any term $t$ that if $T^* \proves t = \btop$ then $T \proves t = \btop$. If $T^* \proves t = \btop$, then in particular $\mathcal{P}(\omega) \models t = \btop$. By Theorem~\ref{thm:compl}, $A \models t = \btop$ for any \LTLI-algebra (i.e., $T$-model) $A$. By first-order completeness, $T \proves t = \btop$.
}
\begin{theorem}\label{thm:completeness}
If $t$ is an $\mathcal{L}$-term and $\mathcal{P}(\omega) \models t = \btop$, then, for any \LTLI-algebra $A$, $A \models t = \btop$.
\end{theorem}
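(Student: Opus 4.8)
The plan is to argue by contraposition: assuming that $t$ fails in some \LTLI-algebra, I will produce a valuation on $\mathcal{P}(\omega)$ refuting $t$, which shows $\mathcal{P}(\omega)\not\models t=\btop$. So suppose $A$ is an \LTLI-algebra and $v$ a valuation with the value of $t$ distinct from $\btop$ in $A$ under $v$. The first step is to replace $A$ by a \emph{finite} \LTLI-algebra: I would filtrate through the finite set $\Sigma$ of subterms of $t$, passing to the finite Boolean subalgebra generated by the values under $v$ of the members of $\Sigma$ and equipping it with suitably filtrated operations $\diam,\X$. The aim is a finite \LTLI-algebra $B$ carrying a valuation under which $t$ is still distinct from $\btop$.

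Next I would dualize $B$ by finite Stone/J\'onsson--Tarski duality. Since $\X$ is a Boolean endomorphism, its dual is a total function $g$ on the set $S$ of atoms of $B$; write $R$ for the dual relation of $\diam$, and let $I_B\subseteq S$ be the set of atoms below $I$. Axioms (i) and (ii) say that, for each $a$, $\diam a$ is the least fixpoint of $z\mapsto a\vee\X z$, which forces $R$ to be exactly the reflexive--transitive closure of $g$; axiom (iii) forces every atom to be $g$-reachable from any atom in $I_B$; and axiom (iv) forces no atom in $I_B$ to lie in the image of $g$. Because $g$ is a function, reachability from a point $r\in I_B$ traces the single orbit $r,g(r),g^2(r),\dots$, which is eventually periodic; combining this with the reachability and initiality facts shows that the dual frame is a \emph{single lasso} whose unique root is a point $r\in I_B$ with no $g$-predecessor, and indeed $I_B=\{r\}$.

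I would then unravel this lasso into $\omega$. Define $h\colon\omega\to S$ by $h(n):=g^n(r)$; this is a surjective bounded morphism (p-morphism) that commutes with successor and $g$, respects the $\diam$-relations in both directions, and satisfies $h^{-1}(I_B)=\{0\}$ (this last is where (iii) and (iv) are used, guaranteeing that $I$ holds along the sequence exactly at position $0$, matching the interpretation $I=\{0\}$ in $\mathcal{P}(\omega)$). Pulling the valuation on $B$ back along $h$ yields a valuation $v'$ on $\mathcal{P}(\omega)$, and the standard truth-preservation lemma for bounded morphisms gives that each subterm takes, at $n\in\omega$, the same value as at $h(n)\in S$. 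Since $t$ fails somewhere in $B$ and, by (iii), a point where $t$ fails is $g$-reachable from $r$, the term $t$ fails at some $n\in\omega$ under $v'$; hence $\mathcal{P}(\omega)\not\models t=\btop$, completing the contrapositive.

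The hard part will be the filtration in the first step: a naive filtration need not yield an \LTLI-algebra, since the defining feature of $\diam$ is that it is the \emph{least} fixpoint of $z\mapsto a\vee\X z$ (equivalently, that $R$ is the transitive \emph{closure} of $g$), and generic filtrations preserve transitivity but neither transitive closure nor the functionality of $\X$. This is precisely the point at which I would invoke the cited, non-trivial completeness of LTL (without until) with respect to the natural numbers: it guarantees that a refutable term is already refuted on an ultimately periodic word, i.e.\ on a lasso model of exactly the shape produced above, thereby supplying the finite model that the filtration is meant to build.
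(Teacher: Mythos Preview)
Your overall architecture---contraposition, pass to a dual frame, identify a lasso, unravel into $\omega$---is the right shape, and your analysis in steps 2--5 of what the dual of a \emph{finite} \LTLI-algebra would look like is correct. The genuine gap is exactly where you flag it, and your proposed escape does not work. Theorem~\ref{thm:completeness} \emph{is} the completeness of the present system with respect to $\omega$; the paper does not cite it as a black box but proves it here (the remark in the introduction refers to the fact that the argument is an adaptation of the classical one). The classical results in the literature are for LTL with until, or for the $(\diam,\X)$-fragment without the constant $I$ and its axioms (iii)--(iv), and neither yields the statement for \LTLI-algebras directly. Treating $I$ as a fresh variable and carrying its axioms along does not reduce to those results either, since axiom~(iii) is a quasi-equation rather than a term identity. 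So ``invoke the cited completeness'' is either circular or leaves precisely the hard part undone.

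The paper avoids the obstacle by \emph{not} filtrating to a finite \LTLI-algebra at all. It first dualizes the (possibly infinite) $A$ to a \LTLI-space $(X,\leq,f,x_0)$, then filtrates $X$ through a finite ``filterable'' set $\Gamma$ of NNF terms (closed under subterms and under prefixing $\diam s$, $\Box s$ by $\X$) to a finite quotient $Y$. On $Y$ it carries \emph{three} relations side by side: the smallest filtration $F$ of $f$ (not functional in general), its reflexive--transitive closure $\leq_Y$, and the \emph{largest} filtration $\preceq$ of $\leq_X$. None of these makes $Y$ a \LTLI-frame, so your clean picture ``$g$ is a function and $R$ is its transitive closure'' is simply not available after filtration. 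The substitute is to prove first, for arbitrary \LTLI-algebras, the connectedness axiom (Con) and Dummett's axiom (Dum); (Con) makes $\leq_X$ linear, hence $\preceq$ linear, and (Dum) then forces $(Y,\preceq)$ to be a ``balloon'' (a strict $\preceq$-chain followed by a single $\preceq$-cluster). The map $\sigma\colon\omega\to Y$ is built by threading $F$-paths along this balloon, and a one-directional truth-preservation statement is proved by induction on $s\in\Gamma$, using $F$ for the $\X$-case, $\leq_Y$ for the $\Box$-case, and $\preceq$ for the $\diam$-case. This interplay of three different filtrated relations, together with the derivation of (Con) and (Dum), is exactly the work your sketch outsources to an unavailable citation.
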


In the rest of this section, we prove Theorem~\ref{thm:completeness}. The proof we give here is an adaptation to our setting of the standard completeness theorem for LTL with respect to $\omega$; in particular, it is directly inspired by the proof given in \cite[Ch.~9]{Gol1992}. Our proof of Theorem~\ref{thm:completeness} is structured as follows. We will first show, in Theorem~\ref{thm:duality}, that \LTLI-algebras can be represented as dual algebras of \emph{\LTLI-spaces} (Definition~\ref{dfn:LTLIspace}), by means of an adaptation of the standard (Stone-)J\'onsson-Tarski representation theorem. We then combine this theorem with an adaptation of a filtration argument for LTL (Lemma~\ref{lem:filtration}) to prove Theorem~\ref{thm:completeness}.

Below, we use the following notational conventions for a preorder (= set endowed with a reflexive and transitive relation) $(X, \leq)$. 
We define $x < y$ as ($x\leq y$ and $x\neq y$); for $Y \subseteq X$, we write ${\uparrow} Y := \{ x \in X \mid y \leq x \text{ for some } y \in Y\}$, and similarly ${\downarrow} Y$; finally, for $x \in X$, we write ${\uparrow} x$ and ${\downarrow} x$ as shorthand for ${\uparrow} \{x\}$ and ${\downarrow} \{x\}$, respectively.

We will now formulate the duality between algebras and general frames (viewed as Stone spaces) that we need here. Our exposition will be brief, as we are only using standard modal logic machinery. We refer to, e.g., \cite[Ch.~5]{BRV2001}, for more details on duality for modal algebras. To motivate the definition of \LTLI-spaces, observe that the $\diam$-reduct of a \LTLI-algebra is an S4-algebra. Through J\'onsson-Tarski representation, S4-algebras correspond to Boolean spaces equipped with a \emph{topological preorder}, i.e., a preorder such that ${\uparrow} x$ is closed for any point $x$ and ${\downarrow} K$ is clopen for any clopen set $K$ (see, e.g., \cite[Rem.~1 in Sec.~2.6]{Geh2014}). The additional structure and properties of \LTLI-algebras now correspond to additional structure and properties of these preordered Boolean spaces, as follows.
\begin{definition}\label{dfn:LTLIspace}
We define a \emph{\LTLI-space}\footnote{Note that our definition of \LTLI-spaces makes crucial use of the second-order structure (topology) on the underlying Kripke frames. This is necessarily so: the class of \LTLI-algebras is not canonical, so it can not be dual to an elementary class of Kripke frames, by a theorem of Fine \cite{Fin1975}.} to be a tuple $(X,\leq,f,x_0)$, where $X$ is a Boolean topological space, $\leq$ is a topological preorder on $X$, $f : X \to X$ is a continuous function, $x_0 \in X$ is a point such that $\{x_0\}$ is clopen, and, for any $x,y  \in X$ and clopen $K \subseteq X$:

\begin{multicols}{2}
\begin{enumerate}
\item $x \leq f(x)$, and if $x < y$ then $f(x) \leq y$,
\item if $f(K) \subseteq K$ then ${\uparrow} K \subseteq K$,
\item $x_0 \leq x$,
\item $f(x) \neq x_0$.
\end{enumerate}
\end{multicols}

The \emph{dual algebra} of a \LTLI-space $(X,\leq,f,x_0)$ is defined to be the tuple $(A,\diam,\X,I)$, where $A$ is the Boolean algebra of clopen subsets of $X$, and for any $K \in A$, $\diam K := {\downarrow} K$, $\X K := f^{-1}(K)$, and $I := \{x_0\}$.
\end{definition}

We now prove our representation theorem for \LTLI-algebras.

\begin{theorem}\label{thm:duality}
The class of \LTLI-algebras coincides with the class of algebras that are isomorphic to the dual algebra of a \LTLI-space.
\end{theorem}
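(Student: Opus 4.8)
The plan is to prove the theorem in two directions, following the standard pattern of a Jónsson--Tarski style representation. The forward direction --- that the dual algebra of any \LTLI-space is a \LTLI-algebra --- should be the routine half: I would take a \LTLI-space $(X,\leq,f,x_0)$ and verify each of the four algebra axioms from the corresponding space axiom. Since $\leq$ is a topological preorder, the clopen sets form a Boolean algebra on which $\diam K := {\downarrow} K$ is a normal modal operator and (because $\leq$ is reflexive and transitive) an S4 operator; axiom (i) of the algebra ($\diam a = a \vee \X\diam a$) should follow from space-axiom (i) relating $f$ and $\leq$, axiom (ii) (the induction/Löb-free fixpoint law, ``$\X a \leq a$ implies $\diam a \leq a$'') from space-axiom (ii) about ${\uparrow} K \subseteq K$, and axioms (iii),(iv) about $I$ directly from space-axioms (iii),(iv) together with $\{x_0\}$ being clopen. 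That $\X K = f^{-1}(K)$ is a Boolean endomorphism is immediate from continuity of $f$ and the fact that preimages commute with Boolean operations.

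The harder and more substantive direction is that every \LTLI-algebra $A$ is isomorphic to the dual algebra of some \LTLI-space. First I would apply the classical Stone duality to the underlying Boolean algebra to obtain the Boolean space $X$ of ultrafilters of $A$, with the isomorphism $a \mapsto \widehat{a} := \{x \in X \mid a \in x\}$ onto the clopens. Then I would define the relational/functional structure on $X$ by the usual Jónsson--Tarski prescription: set $x \leq y$ iff $\diam^{-1}(y) \subseteq x$ in the appropriate sense (equivalently, $y$ contains every $a$ with $\diam a \in \ldots$; the precise canonical-relation recipe for $\diam$), and define $f : X \to X$ as the point-map dual to the Boolean endomorphism $\X$, namely $f(x) := \X^{-1}(x) = \{a \mid \X a \in x\}$, which is an ultrafilter because $\X$ is a Boolean homomorphism, and set $x_0$ to be the unique ultrafilter containing $I$. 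I would then need to check that $(X,\leq,f,x_0)$ satisfies all four \LTLI-space axioms and the topological conditions (topological preorder, continuity of $f$, clopenness of $\{x_0\}$), translating each algebra axiom back across the duality.

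The main obstacle, and the place I expect to spend the most care, is twofold. First, proving that $\{x_0\}$ is clopen --- i.e. that there is a \emph{unique} ultrafilter containing $I$ --- is exactly where algebra-axiom (iii) does its work: axiom (iii) says every nonzero $a$ satisfies $I \leq \diam a$, which forces $I$ to be an atom, so $\widehat{I} = \{x_0\}$ is a singleton clopen. Second, the functional (rather than merely relational) nature of $\X$ and the interaction axioms must be handled carefully: I would need to verify space-axiom (i), that $x \leq f(x)$ and that $x < y$ implies $f(x) \leq y$, from algebra-axiom (i) $\diam a = a \vee \X \diam a$, which is the fixpoint law tying $\diam$ to one step of $\X$; this is the genuinely LTL-specific computation and the least mechanical part. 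I would also confirm $f(x) \neq x_0$ from $\X I = \bot$ (axiom (iv)), since $\X I = \bot$ means $I \notin f(x)$ for every $x$.

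The closing step is to verify that the canonical relation $\leq$ is genuinely a \emph{topological} preorder and that the ${\uparrow} K \subseteq K$ condition (space-axiom (ii)) corresponds to algebra-axiom (ii); here I would invoke the cited general fact that S4-algebras dualize to topological preorders (\cite[Rem.~1 in Sec.~2.6]{Geh2014}), so that only the extra $\X$- and $I$-structure needs bespoke checking. Once the canonical space is shown to be a \LTLI-space, the Stone isomorphism $a \mapsto \widehat{a}$ automatically intertwines $\diam$ with ${\downarrow}(-)$, $\X$ with $f^{-1}$, and $I$ with $\{x_0\}$ by construction, completing the representation and hence the theorem.
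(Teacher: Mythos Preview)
Your proposal is correct and follows essentially the same approach as the paper: invoke the standard J\'onsson--Tarski duality for (S4-algebra, Boolean endomorphism, atom) versus (Boolean space, topological preorder, continuous self-map, isolated point), and then verify item by item that the four \LTLI-algebra axioms correspond to the four \LTLI-space axioms. The one place your sketch is slightly off is the claim that axiom~(iii) alone forces $I$ to be an atom: it does not (e.g.\ with $\diam a = \top$ for all $a\neq\bot$ and $I=\top$), and the paper in fact uses all four axioms here---first $\diam I \leq I$ from (ii) and (iv), then for $\bot < a \leq I$ one gets $I \leq \diam a = a \vee \X\diam a \leq a \vee \X\diam I \leq a \vee \X I = a$ via (iii), (i), and (iv)---so expect that step to be a short genuine computation rather than an immediate consequence of (iii).
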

\begin{proof}
Note that, \emph{in any \LTLI-algebra, $\I$ is an atom}: let $\bot < a \leq \I$ be arbitrary. Then $I \leq \diam a$
by Def.~\ref{dfn:LTLIalgebra}(iii). Also, $\diam \I \leq \I$ by Def.~\ref{dfn:LTLIalgebra}(ii)~and~(iv). Using these facts, Def.~\ref{dfn:LTLIalgebra}(i) and the monotonicity of $\X$ and $\diam$, we obtain that $I \leq a \vee \X \diam a \leq a \vee \X \diam I \leq a \vee \X I$,
i.e., $I\leq a$ by Def.~\ref{dfn:LTLIalgebra}(iv), as required.

Now, by the J\'onsson-Tarski representation theorem (see, e.g., \cite[Section 5.3]{BRV2001}), the class of algebras $(A,\diam,\X,\I)$ where $(A,\diam)$ is an S4-algebra, $\X$ is an endomorphism, and $\I$ is an atom of $A$, coincides with the class of algebras that are isomorphic to the dual algebra of a tuple of the form $(X,\leq,f,x_0)$, where $X$ is a Boolean space, $\leq$ is a topological preorder on $X$, $f$ is a continuous function on $X$ and $x_0 \in X$ is such that $\{x_0\}$ is clopen.

It remains to prove that $(X,f,\leq,x_0)$ validates (i)-(iv) in the definition of \LTLI-space if, and only if, its dual algebra $A$ is a \LTLI-algebra. This follows from the following claim.

{\bf Claim.} The following equivalences hold.
\begin{enumerate}
\item[(a)] $x \leq f(x)$ for all $x \in X$ $\iff$ $\N \diam a \leq \diam a$ for all $a \in A$.
\item[(b)] if $x < y$ then $f(x) \leq y$ for all $x, y \in X$ $\iff$ $\diam a \leq a \vee \N \diam a$ for all $a \in A$.
\item[(c)] $(X, f, \leq)$ validates (ii) in Definition~\ref{dfn:LTLIspace} $\iff$ $A$ validates (ii) in Definiton~\ref{dfn:LTLIalgebra}.
\item[(d)] $x_0 \leq x$ for all $x \in X$ $\iff$ $\I \leq \diam a$ for all $a \in A \setminus \{\bot\}$.
\item[(e)] $f(x) \neq x_0$ for all $x \in X$ $\iff$ $\X \I = \bot$.
\end{enumerate}

{\it Proof of Claim.}
(a) ($\Rightarrow$) Let $a \in A$ be arbitrary, and suppose that $x \in \N \diam a$. Then $f(x) \in \diam a$, and $x \leq f(x)$, so $x \in \diam a$.\\
($\Leftarrow$) Suppose that $x \nleq f(x)$ for some $x \in X$. Since ${\uparrow} x$ is closed, there exists a clopen set $a \in A$ such that $f(x) \in a$ and ${\uparrow} x \cap a = \emptyset$. In particular, $f(x) \in \diam a$, so $x \in \N\diam a$, but $x \not\in \diam a$.

(b) ($\Rightarrow$) Let $a \in A$ be arbitrary, and suppose that $x \in \diam a$ but $x \not\in a$. Since $x \in \diam a$, pick $y \geq x$ such that $y \in a$. Since $x \not\in a$, we have $x \neq y$, so $f(x) \leq y$. Hence, $x \in \N\diam a$.\\
($\Leftarrow$) Suppose that there exist $x, y \in X$ such that $x < y$, but $f(x) \nleq y$. Since ${\uparrow} f(x)$ is closed, pick $a_1 \in A$ such that $y \in a_1$ and ${\uparrow} f(x) \cap a_1 = \emptyset$. Since $x \neq y$, pick $a_2 \in A$ such that $y \in a_2$ and $x \not\in a_2$. Let $a := a_1 \cap a_2$. Note that $x \in \diam a$, since $x \leq y$ and $y \in a$. However, we have $x \not\in a$, and $x \not\in \N\diam a$, contrary to the assumption that $\diam a \leq a \vee \N \diam a$ for all $a \in A$.

(c) Note that $f(K) \subseteq K$ if, and only if, $f^{-1}(K^c) \subseteq K^c$, and that ${\uparrow} K \subseteq K$ if, and only if, ${\downarrow} (K^c) \subseteq K^c$. The stated equivalence now follows from the definitions of $\X$ and $\diam$.

(d) ($\Rightarrow$) If $a \in A \setminus \{\bot\}$, then there is $x \in a$. Since $x_0 \leq x$, it follows that $\I = \{x_0\} \leq \diam a$.\\
($\Leftarrow$) If $x_0 \nleq x$, pick $a \in A$ such that $x \in a$ and ${\uparrow} x_0 \cap a = \emptyset$. Then $a \neq \bot$, but $x_0 \not\in \diam a$, so $\I \nleq \diam a$.

(e) Clear from the definitions.
\end{proof}

We may use the representation theorem, Theorem~\ref{thm:duality}, to prove the following proposition, which will be used in the proof of Lemma~\ref{lem:filtration}.

\begin{proposition}\label{prop:3dum}
Any \LTLI-algebra validates the equations
\begin{enumerate}
\item[(Con)] $\Box(\Box a \to b) \vee \Box (\Box b \to a) = \btop$,
\item[(Dum)] $\Box(\Box(a \to \Box a) \to a) \wedge \diam \Box a \leq a$.
\end{enumerate}
Moreover, the preorder on any \LTLI-space is linear.
\end{proposition}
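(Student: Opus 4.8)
The plan is to handle the two equations and the linearity claim in a deliberate order: I would prove (Con) first by a direct algebraic computation, deduce linearity from it via the duality of Theorem~\ref{thm:duality}, and only then treat (Dum) semantically on the resulting linear spaces. Throughout I would use the dual form of Definition~\ref{dfn:LTLIalgebra}(i), namely $\Box c = c \wedge \X\Box c$ (so that $\Box c \leq c$ and $\X\diam c \leq \diam c$), together with the fact that $\X$ is a Boolean endomorphism. For (Con), I would set $u := \Box a \wedge \neg b$ and $v := \Box b \wedge \neg a$, so that the negation of the left-hand side of (Con) is $g := \diam u \wedge \diam v$, and the goal becomes $g = \bbot$. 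First I would record three facts: $u \wedge v = \bbot$ (since $u \leq \Box a \leq a$ while $v \leq \neg a$); $\X\diam v \leq \diam\neg a = \neg\Box a$ (since $v \leq \neg a$ and $\X\diam c \leq \diam c$); and symmetrically $\X\diam u \leq \neg\Box b$. Expanding $g$ with Definition~\ref{dfn:LTLIalgebra}(i) and using that $\X$ commutes with $\wedge$, the four resulting terms are $u \wedge v$, $u \wedge \X\diam v$, $v \wedge \X\diam u$, and $\X\diam u \wedge \X\diam v = \X g$; the first three vanish by the three facts (e.g.\ $u \wedge \X\diam v \leq \Box a \wedge \neg\Box a = \bbot$), leaving $g = \X g$.

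From $g = \X g$ I would extract that $g$ is simultaneously a ``downset'' and an ``upset'': $g \leq \X g$ gives $\X\neg g \leq \neg g$, hence $\diam\neg g = \neg g$ by Definition~\ref{dfn:LTLIalgebra}(ii), while $\X g \leq g$ gives $\diam g = g$. Then the atom structure of $\I$ forces $g \in \{\bbot,\btop\}$: if $\bbot \neq g \neq \btop$, then since $\I \leq \diam c$ for every $c \neq \bbot$ (Definition~\ref{dfn:LTLIalgebra}(iii)) we would get $\I \leq \diam g = g$ and $\I \leq \diam\neg g = \neg g$, forcing $\I = \bbot$, a contradiction. Finally $g = \btop$ is impossible, since it would give $\diam v = \btop$, hence $\X\diam v = \X\btop = \btop \leq \neg\Box a$, hence $\Box a = \bbot$ and $u = \bbot$, so $\diam u = \bbot \neq \btop = g \leq \diam u$. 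Thus $g = \bbot$, i.e.\ (Con) holds.

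Linearity I would then read off from (Con). By Theorem~\ref{thm:duality} it suffices to argue on a \LTLI-space $(X,\leq,f,x_0)$ whose dual algebra validates (Con). Given $y,z \in X$ with $y \not\leq z$ and $z \not\leq y$, I would use that $X$ is a Boolean space and ${\uparrow}y, {\uparrow}z$ are closed to pick clopen sets $a \supseteq {\uparrow}y$ with $z \notin a$ and $b \supseteq {\uparrow}z$ with $y \notin b$. Reading $\Box a = \{w \mid {\uparrow}w \subseteq a\}$, this gives $y \in \Box a \wedge \neg b$ and $z \in \Box b \wedge \neg a$; since $x_0 \leq y$ and $x_0 \leq z$ by Definition~\ref{dfn:LTLIspace}(iii), the point $x_0$ lies in $\diam(\Box a \wedge \neg b) \wedge \diam(\Box b \wedge \neg a)$, the complement of the left-hand side of (Con). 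This contradiction shows the preorder is linear.

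For (Dum) I would argue semantically on a \LTLI-space, now known to be linear. Assuming some $x$ satisfies the left-hand side of (Dum) but $x \notin a$, the set ${\uparrow}x \cap \neg a$ is closed and nonempty, so by compactness together with linearity it has a maximal point $w_0$. Since $x \in \Box(\Box(a \to \Box a) \to a)$ and $w_0 \geq x$ with $w_0 \notin a$, I get $w_0 \notin \Box(a \to \Box a)$, producing $v \geq w_0$ with $v \in a$ and $v \notin \Box a$, and in turn $u \geq v$ with $u \notin a$; maximality then forces $u$, and hence $v$, into the cluster of $w_0$. If that cluster is trivial, $v = w_0 \in a$ contradicts $w_0 \notin a$ and we are done. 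The hard part will be the nontrivial-cluster case: here I would invoke the induction axiom Definition~\ref{dfn:LTLIspace}(ii), using that $x \in \diam\Box a$ supplies some $w^*$ with ${\uparrow}w^* \subseteq a$ lying strictly above $w_0$ (i.e.\ $w_0 \leq w^*$, $w^* \not\leq w_0$), and show that no point can sit strictly above a nontrivial cluster — intuitively because the $f$-orbit of a cluster point remains in the cluster, so the associated $f$-closed clopen sets can fail to be upsets only when the cluster is maximal. Converting this intuition into a clean topological-induction argument ruling out nontrivial non-maximal clusters is, I expect, the most delicate step of the whole proof.
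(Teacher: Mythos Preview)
Your treatment of (Con) is correct and genuinely different from the paper's. Where the paper works on the dual space and shows that the complement $K$ of the (Con)-term satisfies $f(K)\subseteq K$ (then invokes Definition~\ref{dfn:LTLIspace}(ii) and derives $K=\emptyset$), you give a clean purely algebraic argument: the computation $g=\X g$, the passage to $\diam g=g$ and $\diam\neg g=\neg g$ via Definition~\ref{dfn:LTLIalgebra}(ii), and the $\{\bot,\top\}$-dichotomy via Definition~\ref{dfn:LTLIalgebra}(iii) are all sound. Your linearity argument is then essentially the paper's (the paper cites the textbook correspondence between (Con) and connectedness and uses the root $x_0$; you unfold that correspondence explicitly).

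The gap is in (Dum). Your reduction lands on the claim ``no point sits strictly above a nontrivial cluster'', and you propose to extract this from Definition~\ref{dfn:LTLIspace}(ii) using that the $f$-orbit of a cluster stays in the cluster. That intuition is correct ($c<c'$ within a cluster gives $f(c)\leq c'$), but it does not convert into an application of (ii): the cluster $C$ is in general \emph{not} clopen, and there is no obvious way to manufacture a clopen $K$ with $C\subseteq K$, $w^*\notin K$, and $f(K)\subseteq K$. Worse, ``nontrivial clusters are final'' is precisely the Kripke-frame correspondent of (Dum), so the argument is circling back to what you are trying to prove; on general frames this first-order frame condition does not follow automatically from validity of the equation, and here you have not yet established the equation.

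The paper avoids this detour entirely. For (Dum) it uses the \emph{same} template as for (Con): set $K:=\Box(\Box(a\to\Box a)\to a)\wedge\diam\Box a\wedge\neg\Box a$, verify $f(K)\subseteq K$ by a short case analysis (the extra conjunct $\Box(\Box(a\to\Box a)\to a)$ is exactly what makes the ``$w'=x$'' case go through), apply Definition~\ref{dfn:LTLIspace}(ii) to get ${\uparrow}K\subseteq K$, and observe that any witness $z\in\Box a$ above a point of $K$ would lie in $K$ yet not in $\neg\Box a$. No linearity, no maximal-point extraction, and no cluster analysis are needed. If you want to salvage your route, the missing piece is a \emph{clopen} $f$-invariant set separating the cluster from $w^*$; short of producing one, the paper's direct argument is the efficient fix.
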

\begin{proof}
Let $A$ be a \LTLI-algebra dual to a \LTLI-space $(X,\leq,f,x_0)$. 

(Con) Let $a, b \in A$ be arbitrary, and write $K := \diam \neg(\Box a \to b) \wedge \diam \neg (\Box b \to a)$, the complement of $\Box(\Box a \to b) \vee \Box (\Box b \to a)$. We need to show that $K = \emptyset$. We prove first that $f(K) \subseteq K$. If $x \in K$, pick $y \in \neg(\Box a \to b) = \Box a \wedge \neg b$ and $z \in \neg(\Box b \to a) = \Box b \wedge \neg a$ with $y, z \geq x$. Then $y \neq x$, since $y \in \Box a$ but $z \not\in a$, while $x \leq z$. So $f(x) \leq y$. Similarly, $f(x) \leq z$, so $f(x) \in K$. By Def.~\ref{dfn:LTLIspace}(ii), we obtain that ${\uparrow} K \subseteq K$. Now, if we would also have that $K \neq \emptyset$, then there would exist $x \in K$ and $y \geq x$ such that $y \in \Box a \wedge \neg b$. But then $y \in \Box (\Box b \to a)$, so $y \not\in K$, contradicting that ${\uparrow} K \subseteq K$. Therefore, we must have $K = \emptyset$, as required.

(Dum) Let $a \in A$ be arbitrary. We will prove $\Box(\Box(a \to \Box a) \to a) \wedge \diam \Box a \leq \Box a$, from which (Dum) follows since $\Box a \leq a$. The method is the same as before. Let us write $K := \Box(\Box(a \to \Box a) \to a) \wedge \diam \Box a \wedge \neg \Box a$. We need to show that $K = \emptyset$. We prove first that $f(K) \subseteq K$. Let $x \in K$ be arbitrary. Pick $y \geq x$ such that $y \not\in a$, and pick $z \geq x$ such that $z \in \Box a$. Then $z \neq x$, so $f(x) \leq z$, and $f(x) \in \diam \Box a$. Also, since ${\uparrow} (\Box b) \subseteq \Box b$ holds for any $b$, and $x \leq f(x)$, we have $f(x) \in \Box (\Box(a \to \Box a) \to a)$. It remains to prove that $f(x) \in \neg \Box a$. If $y \neq x$, then $f(x) \leq y$, and we are done. Otherwise, we have that $y = x$, so $x \not\in a$. Since $x \in \Box(a \to \Box a) \to a$, pick $w \geq x$ such that $w \in a$ and $w \in \neg \Box a$. Then $w \neq x$, so $f(x) \leq w$, so $f(x) \in \neg \Box a$, as required. Again, by Def.~\ref{dfn:LTLIspace}(ii), we obtain ${\uparrow} K \subseteq K$, and if $K$ were non-empty, we would have $x \in K$ and $z \geq x$ such that $z \in \Box a$, but then $z \not\in K$, contradiction. So $K = \emptyset$, as required.

Finally, since (Con) holds in $A$, the preorder $\leq$ is connected, i.e., for any $z,x,y \in X$, if $z \leq x$ and $z \leq y$, then $x \leq y$ or $y \leq x$ (see, e.g., \cite[Ex.~4.3.3]{BRV2001}). Since the \LTLI-space $(X,\leq,f,x_0)$ has a minimum element, $x_0$, it follows that the preorder is linear.
\end{proof}

We now turn to the filtration argument. The following syntactic lemma, which allows us to rewrite terms into negation normal form, will be useful. We call an $\mathcal{L}$-term a \emph{literal} if it is either a variable, a constant ($\bbot$ or $\I$), a negated variable or a negated constant. An $\mathcal{L}$-term is in \emph{negation normal form} (NNF) if it is built up from literals by repeated applications of $\vee$, $\wedge$, $\diam$, $\Box$ and $\N$.
\begin{lemma}\label{lem:NNF}
For any $\mathcal{L}$-term $t$, there exists an $\mathcal{L}$-term $t'$ in negative normal form such that $T \proves t = t'$.
\end{lemma}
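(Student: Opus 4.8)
The plan is to prove, by induction on the structure of $t$, the slightly stronger statement that for every $\mathcal{L}$-term $t$ there exist NNF terms $t^+$ and $t^-$ with $T \proves t = t^+$ and $T \proves \neg t = t^-$. The point of carrying the negated version $t^-$ along is that it lets us push negations all the way down to the literals in a single induction. Recall that, as a primitive term, $t$ is built from variables and the constants $\bbot$, $\I$ using only $\vee$, $\neg$, $\diam$ and $\X$, since $\wedge$, $\Box$, $\to$ and $\liff$ are mere abbreviations.

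In the base case $t$ is a variable or one of the constants $\bbot$, $\I$. Then $t$ is itself a literal, and $\neg t$ is a negated variable or a negated constant, hence also a literal; so we may take $t^+ := t$ and $t^- := \neg t$, both of which are (trivially) in NNF.

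For the inductive step I would treat each top-level operation in turn, using only Boolean reasoning available in $T$ together with the defining properties of the operators. If $t = s \vee r$, I set $t^+ := s^+ \vee r^+$ and, using De Morgan ($T \proves \neg(s \vee r) = \neg s \wedge \neg r$), $t^- := s^- \wedge r^-$. If $t = \neg s$, I simply swap roles, taking $t^+ := s^-$ and $t^- := s^+$, the latter justified by double negation. If $t = \diam s$, I set $t^+ := \diam s^+$ and, using the definition of $\Box$ ($T \proves \neg \diam s = \Box \neg s$), $t^- := \Box s^-$. Finally, if $t = \X s$, I set $t^+ := \X s^+$ and, using that $\X$ is a Boolean endomorphism and hence commutes with $\neg$ ($T \proves \neg \X s = \X \neg s$), $t^- := \X s^-$. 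In each case $t^+$ and $t^-$ are built from NNF terms by a single application of $\vee$, $\wedge$, $\diam$, $\Box$ or $\X$, so they are again in NNF, and the displayed identities hold in $T$ since $T$ contains the Boolean algebra axioms and the axioms of Definition~\ref{dfn:LTLIalgebra}. Taking $t' := t^+$ proves the lemma.

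There is no serious obstacle here; the only case worth flagging is $\X$. Whereas $\diam$ needs its dual $\Box$ to absorb a negation, the operator $\X$ commutes with $\neg$ precisely because it is a \emph{Boolean} endomorphism rather than merely a normal modal operator, so pushing a negation past $\X$ introduces no new operator and keeps us inside the NNF fragment.
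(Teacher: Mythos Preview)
Your proof is correct and takes essentially the same approach as the paper: both push negations inward via the identities $\neg(s_1 \vee s_2) = \neg s_1 \wedge \neg s_2$, $\neg \diam s = \Box \neg s$, and $\neg \X s = \X \neg s$. The only difference is presentational---the paper states these rewrite rules and leaves the induction implicit, while you make the induction explicit by simultaneously maintaining NNF representatives $t^+$ and $t^-$ for $t$ and $\neg t$; this is a standard and slightly more rigorous way of packaging the same argument.
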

\begin{proof}
By induction on the complexity of the $\mathcal{L}$-term $t$, we can push any negation inwards, making use of the following $T$-provable equalities: $\neg(s_1 \vee s_2) = \neg s_1 \wedge \neg s_2$, $\neg(s_1 \wedge s_2) = \neg s_1 \vee \neg s_2$, $\neg \diam s = \Box \neg s$, $\neg \Box s = \diam \neg s$ and $\neg \N s = \N \neg s$.
\end{proof}

\begin{definition}
A finite set $\Gamma$ of NNF $\mathcal{L}$-terms is {\it filterable} if
\begin{enumerate}
\item $\Gamma$ contains $I$,
\item $\Gamma$ is closed under subterms,
\item whenever $\Gamma$ contains $\diam s$ for some term $s$, $\Gamma$ also contains $\N \diam s$,
\item whenever $\Gamma$ contains $\Box s$ for some term $s$, $\Gamma$ also contains $\N \Box s$.
\end{enumerate}
\end{definition}

\begin{lemma}\label{lem:filtration}
Let $\Gamma$ be a filterable set, and denote by $v_1,\dots,v_N$ the variables occurring in $\Gamma$. For any \LTLI-space $(X,f,\leq_X, x_0)$ with dual algebra $A$ and $\overline{a} \in A^N$, there exists $\overline{p} \in \mathcal{P}(\omega)^N$ with the following property: for any $x \in X$, there is $n_x \in \omega$ such that for all $s \in \Gamma$,
\begin{align}\label{eq:star}
\tag{$\star$}
\text{if } x \in s^A(\overline{a}) \text{ then } n_x \in s^{\mathcal{P}(\omega)}(\overline{p}).
\end{align}
\end{lemma}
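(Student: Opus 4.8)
The plan is to pass to a finite filtration of $(X,f,\le,x_0)$ through $\Gamma$, read off an infinite word over $\omega$ from a suitably chosen path in this finite quotient, and then verify $(\star)$ by induction on the negation normal form of the terms in $\Gamma$.

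First I would define, for $x\in X$, its type $\tau(x):=\{s\in\Gamma : x\in s^A(\overline a)\}$; since each $s^A(\overline a)$ is clopen and $\Gamma$ is finite, $\tau$ takes only finitely many values and is locally constant. Let $\widehat X$ be the (finite) set of realized types, ordered by the image of $\le$ (which is linear by Proposition~\ref{prop:3dum}) and equipped with the successor relation $T\mathbin{\widehat R}T'$ iff $T=\tau(x)$ and $T'=\tau(f(x))$ for some $x$. Because $\Gamma$ is filterable, the conditions that $\Gamma$ contains $\N\diam s$ whenever it contains $\diam s$ and $\N\Box s$ whenever it contains $\Box s$ guarantee that membership of $x$ in the $\diam$- and $\Box$-subterms of $\Gamma$ is tracked correctly by $\widehat R$; combined with (Con) and (Dum) from Proposition~\ref{prop:3dum}, the $\diam$-reduct filtrates to a finite linear preorder on which every diamond that holds is fulfilled by some later type. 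This is the filtration theorem: $x\in s^A(\overline a)$ iff $s\in\tau(x)$, computed inside $\widehat X$, for every $s\in\Gamma$.

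The key structural input is Definition~\ref{dfn:LTLIspace}(ii): I would use it to show that every type is reachable from $\tau(x_0)$ along $\widehat R$. Indeed, the set $K$ of $\widehat R$-reachable types is closed under the successor relation and contains the minimum $\tau(x_0)$, so by the (finite, hence clopen) instance of Definition~\ref{dfn:LTLIspace}(ii) it is upward closed, and therefore all of $\widehat X$. Since $\widehat R$ is serial and $\widehat X$ is finite and linearly ordered, I can then construct a single $\omega$-indexed $\widehat R$-path $S_0\mathbin{\widehat R}S_1\mathbin{\widehat R}\cdots$ starting at $S_0=\tau(x_0)$ that passes through every type and realizes every eventuality, i.e., whenever $\diam s\in S_n$ there is $m\ge n$ with $s\in S_m$. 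Note that each step is $\widehat\le$-increasing, since $x\le f(x)$. Finally I would set $p_i:=\{n\in\omega : v_i\in S_n\}$, so that along the word the literals are read off by definition and the constant $I$ is interpreted correctly as $\{0\}$, matching $I^{\mathcal P(\omega)}=\{0\}$ (recall $f(x)\ne x_0$, so $I$ holds only at $S_0=\tau(x_0)$). For $x\in X$ I choose $n_x$ with $S_{n_x}=\tau(x)$, available since the path visits every type. A straightforward induction on the NNF structure (Lemma~\ref{lem:NNF}) then proves, for all $n$ and all $s\in\Gamma$, that $s\in S_n$ implies $n\in s^{\mathcal P(\omega)}(\overline p)$: literals and the Boolean connectives are immediate; the $\N$ case uses $S_n\mathbin{\widehat R}S_{n+1}$; the $\diam$ case uses eventuality realization; and the $\Box$ case uses that $\Box$-membership persists along the $\widehat\le$-increasing path. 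Instantiating at $n_x$ yields $(\star)$.

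The main obstacle is the construction in the third paragraph: reconciling the deterministic successor $f$ (which pins down the $\N$-operator and forces the word to be a single forward $\widehat R$-orbit) with the need to both visit every type and fulfill every eventuality. A naive choice such as the literal orbit $f^n(x_0)$ fails, since this orbit need not be topologically dense and can miss entire regions lying beyond a limit point; it is exactly Definition~\ref{dfn:LTLIspace}(ii), together with finiteness of the filtration, that rules out such inaccessible regions and makes the required eventuality-realizing, type-covering path exist. Verifying the filtration theorem for the linear $\diam$-reduct, via (Con) and (Dum), is the other technical point.
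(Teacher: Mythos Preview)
Your plan matches the paper's: filtrate by $\Gamma$-types, build an $\omega$-path through the finite quotient, read off $\overline p$ from the path, and verify $(\star)$ by NNF induction. Two places where the paper is more careful than your sketch deserve comment.

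First, ``the image of $\le$'' on the quotient is not the right relation; it need not even be transitive (on a five-point chain with types $\{3\},\{1,5\},\{2\}$ one gets $\{3\}\mathrel{\widehat\le}\{1,5\}\mathrel{\widehat\le}\{2\}$ via $3\le 5$ and $1\le 2$, but not $\{3\}\mathrel{\widehat\le}\{2\}$). The paper works with \emph{two} filtrations of $\le_X$: the reflexive--transitive closure $\le_Y$ of your $\widehat R$, which handles the $\Box$ case (via $\Box t\le\X\Box t$ and $\X\Box t\in\Gamma$, so $\Box$-membership persists along $\widehat R$-paths by induction on length), and the relation $y\preceq y'$ defined by ``for \emph{every} $x$ with $q(x)=y$ there exists $x'\ge_X x$ with $q(x')=y'$'', which is what makes the $\diam$ case work. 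Linearity of $\preceq$ follows from linearity of $\le_X$, and one checks $\preceq\,\subseteq\,\le_Y$.

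Second, the eventuality-realizing path is where (Dum) does real work, and it is less automatic than your sketch indicates. The paper uses (Dum) to prove (its Claim~1(viii)) that $(Y,\preceq)$ is a \emph{balloon}: a strict chain $y_0\prec\cdots\prec y_{\alpha-1}$ followed by a single $\preceq$-cluster $\{y_\alpha,\dots,y_{\alpha+\beta-1}\}$. The path $\sigma$ then visits $y_0,\dots,y_{\alpha-1}$ in order and cycles through the cluster forever, inserting $\widehat R$-paths between consecutive $y_i$ (available since $\preceq\,\subseteq\,\le_Y$). In the $\diam$ step of the induction it is not enough that some $\preceq$-later type contains $t$; one must show such a type is actually visited by $\sigma$ \emph{after position $n$}. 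The paper first propagates $\diam t$ forward along $\sigma$ until it reaches the cluster (using $\diam t\le t\vee\X\diam t$ and $\X\diam t\in\Gamma$), and then applies the $\preceq$-witness property inside the cluster, whose types $\sigma$ visits infinitely often.
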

\begin{proof}
Let $(X,f,\leq_X, x_0)$ be a \LTLI-space and let $\overline{a} \in A^N$. Throughout this proof, if $x \in X$ and $s$ is a term, we will write ``$x \in s$'' as shorthand for ``$x \in s^A(\overline{a})$''.

Let $\sim$ be the equivalence relation on $X$ defined by
\[ x \sim x' \iff \text{for all } s \in \Gamma \ : \ x \in s \text{ if, and only if } x' \in s.\]
Since $\Gamma$ is finite, $X/{\sim}$ is finite. We write $Y := X/{\sim}$ and $q : X \twoheadrightarrow Y$ for the quotient map. Note that $q$ is continuous with respect to the discrete topology on $Y$, since each equivalence class $[x]_{\sim}$ is clopen: it can be described by the formula $\bigwedge_{x \in \gamma \in \Gamma}\gamma \wedge \bigwedge_{x \not\in\gamma \in \Gamma} \neg \gamma$.

We will define three relations, $F$, $\leq_Y$ and $\preceq$, on the quotient $Y$.
First, define the relation $F$ by:
\[ y F y' \iff \text{there exists } x \in X \text{ such that } q(x) = y \text{ and } q(f(x)) = y',\]
that is, $F$ is the smallest filtration of the (functional) relation $f$ on $X$, cf., e.g., \cite[Lem.~2.40]{BRV2001}.
Let $\leq_Y$ be the reflexive and transitive closure of $F$, i.e.,
\begin{align*}
y \leq_Y y' \iff &\text{there exist } m \geq 0 \text{ and } z_0, \dots, z_m \in Y \text{ such that } \\
&z_0 = y, z_m = y', \text{ and } z_i F z_{i+1} \text{ for all } 0 \leq i < m.
\end{align*}

 Finally, we define a relation $\preceq$ on $Y$ by
\begin{align*} 
y \preceq y' \iff \text{for } &\text{all } x \in X, \text{ if } q(x) = y, \text{ then there exists } \\
&x' \in X \text{ such that } x \leq x' \text{ and } q(x) = y'.
\end{align*}

In the following claim, we collect several properties of the three relations defined above. In particular, (i) \& (ii) show that $F$ is a filtration of $f$, (iii) \& (iv) show that $\leq_Y$ is a filtration of $\leq_X$, (v)--(viii) provide detailed properties of the relation $\preceq$ that we need in our construction, and (ix) \& (x) show that the properties of the minimum element $x_0$ are preserved in $(Y,\preceq)$.\\

{\bf Claim 1.} For any $x, x' \in X$, $y, y' \in Y$, the following hold.
\begin{enumerate}
\item $q(x) F q(f(x))$.
\item whenever $\N t \in \Gamma$, if $q(x) F q(x')$ then $x \in \N t$ if, and only if, $x' \in t$.
\item if $x \leq_X x'$, then $q(x) \leq_Y q(x')$.
\item whenever $\Box t \in \Gamma$, if $q(x) \leq_Y q(x')$ and $x \in \Box t$, then $x' \in t$.
\item $\preceq$ is a linear preorder.
\item if $y \preceq y'$, then $y \leq_Y y'$.
\item whenever $\diam t \in \Gamma$, if $x \in \diam t$ then there exists $w$ such that $q(x) \preceq q(w)$ and $w \in t$.
\item if $y \preceq y'$ and $y' \preceq y$, then either $y = y'$, or $z \preceq y$ for all $z \in Y$.
\item $q(x) = q(x_0)$ if, and only if, $x = x_0$.
\item $q(x_0) \preceq y$ and not $y {F} q(x_0)$.
\end{enumerate}

\begin{proof}[Proof of Claim 1.]

(i) and (ii) are standard, cf. e.g. \cite[Lem. 2.40]{BRV2001}.


(iii) Note that it suffices to prove that $K_y := \{x' \in X \ | \ y \leq_Y q(x')\}$ is an up-set in $\leq_X$ for any $y \in Y$. 
Notice that $K_y$ is clopen, since $q$ is continuous and $Y$ is finite. 
To show that $K_y$ is an up-set, by Def~\ref{dfn:LTLIspace}(ii), it suffices to prove that $f(K_y) \subseteq K_y$. Indeed, if $x' \in K_y$, then $y \leq_Y q(x') {F} q(f(x'))$, and $\leq_Y$ is a transitive relation containing $F$, so $q(f(x')) \in K_y$. 

(iv) Suppose that $\Box t \in \Gamma$ and $x \in \Box t$. By induction on $m$, using that $\X\Box t \in \Gamma$ since $\Gamma$ is filterable, one can prove that if there exists an $F$-path of length $m$ from $q(x)$ to $q(x')$, then $x' \in \Box t$.
Now, if $q(x) \leq_Y q(x')$, then by definition of $\leq_Y$ there exists an $F$-path from $q(x)$ to $q(x')$, so $x' \in \Box t$. In particular, $x' \in t$.
\auxproof{We prove by induction on $m \geq 0$ that, if there exists an $F$-path of length $m$ from $q(x)$ to $q(x')$, then $x' \in \Box t$. For $m = 0$, we have $q(x) = q(x')$, and $x \in \Box t \in \Gamma$, so $x' \in \Box t$. Assume the induction hypothesis for some $m \geq 0$, and suppose that there exists an $F$-path of length $m + 1$ from $q(x)$ to $q(x')$. Let $z F q(x')$ be the last step of this $F$-path. Pick $x''$ such that $q(x'') = z$ and $q(f(x'')) = q(x')$. Since there is an $F$-path of length $m$ from $q(x)$ to $z = q(x'')$, the induction hypothesis yields that $x'' \in \Box t$. Since $\Box t \subseteq \X \Box t$, we obtain $f(x'') \in \Box t$. Now $f(x'') \sim x'$ and $\Box t \in \Gamma$ together yield $x' \in \Box t$.}

(v) Reflexivity and transitivity are straight-forward to prove. For linearity, suppose that $y \not\preceq y'$. Pick $x \in X$ such that $q(x) = y$ and for any $x'$ such that $q(x') = y'$, we have $x \nleq_X x'$. Since $\leq_X$ is linear (Prop.~\ref{prop:3dum}), we have $x' \leq_X x$ for any $x'$ such that $q(x') = y'$, so $y' \preceq y$.
\auxproof{It is clear that $\preceq$ is reflexive. If $y \preceq y' \preceq y''$, let $x \in X$ be such that $q(x) = y$. Pick $x' \geq_X x$ such that $q(x') = y'$, and $x'' \geq_X x'$ such that $q(x'') = y''$. Then, since $\leq_X$ is transitive, $x'' \geq_X x$ and $q(x'') = y''$. So $y \preceq y''$.}

(vi) Suppose that $y \preceq y'$. Pick $x \in X$ such that $q(x) = y$. By definition of $\preceq$, pick $x' \geq_X x$ such that $q(x') = y'$. By (iii), we have $y = q(x) \leq_Y q(x') = y'$.

(vii) Suppose that $\diam t \in \Gamma$ and $x \in \diam t$. Let $T := \{y \in Y \ | \ q(x) \not\preceq y\}$.
For each $y \in T$, we have $q(x) \not\preceq y$, so pick $x_y \in X$ such that $q(x_y) = q(x)$ and, for any $w$ such that $q(w) = y$, we have $x_y \nleq w$. Since $\leq_X$ is linear (Prop.~\ref{prop:3dum}), choose an enumeration $y_0, \dots, y_m$ of the elements of $T$ such that $x_{y_0} \leq_X \dots \leq_X x_{y_m}$. Since $x \in \diam t \in \Gamma$ and $x_{y_m} \sim x$, we have $x_{y_m} \in \diam t$. Pick $w \geq_X x_{y_m}$ such that $w \in t$. Note that $q(w) \not\in T$: otherwise, we would have $q(w) = y_j$ for some $y_j \in T$, and $x_{y_j} \leq_X x_{y_m} \leq_X w$, contradicting the choice of $x_{y_j}$. Therefore, since $\preceq$ is linear by (v), we must have $q(x) \preceq q(w)$, as required.

(viii) Suppose that $y \preceq y'$, $y \neq y'$ and that there exists $z \in Y$ such that $z \not\preceq y$. We prove that $y' \not\preceq y$. Let $a := \{ v \in X \ | \ q(v) \neq y\}$, a clopen subset of $X$ since $q$ is continuous. Pick an element $x \in X$ such that $q(x) = y$, i.e., $x \not\in a$. We note first that $x \in \diam \Box a$: since $z \not\preceq y$, pick $w \in X$ such that $q(w) = z$ and, whenever $w \leq_X v$, $q(v) \neq y$. Now $w \in \Box a$, so $w \not\leq_X x$, and hence $x \leq_X w$ since $\leq_X$ is linear. Now, since $A$ verifies (Dum) (Prop.~\ref{prop:3dum}), $x \not\in a$, and $x \in \diam \Box a$, we obtain $x \not\in \Box(\Box(a \to \Box a) \to a)$. Pick $x_1 \geq_X x$ such that $x_1 \in \Box(a \to \Box a)$ and $x_1 \not\in a$. By definition of $a$, we have $q(x_1) = y \preceq y'$, so we may pick $x_2 \geq_X x_1$ such that $q(x_2) = y'$. Since $y \neq y'$, we have $x_2 \in a$. Thus, since  $x_1 \in \Box(a \to \Box a)$,
and $x_1 \leq_X x_2$, we obtain $x_2 \in \Box a$. In particular, there is no $v \geq_X x_2$ such that $q(v) = y$, so $y' \not\preceq y$, as required.

(ix) If $q(x) = q(x_0)$, then since $x_0 \in \I$ and $\I \in \Gamma$, we have $x \in \I$. Therefore, $x = x_0$.

(x) By (ix) and the fact that $x_0 \leq_X x$ for any $x \in X$, it is clear that $q(x_0) \preceq y$ for any $y \in Y$. If we would have $y {F} q(x_0)$, then there would exist $x \in X$ such that $q(x) = y$ and $q(f(x)) = q(x_0)$. However, by (ix) we would get $f(x) = x_0$, contradicting Def.~\ref{dfn:LTLIspace}(iv).
\end{proof}
We will write $\equiv$ for the equivalence relation induced by $\preceq$, i.e., $y \equiv y'$ if, and only if, $y \preceq y'$ and $y' \preceq y$.
By Claim 1(viii), the preorder $(Y,\preceq)$ is a ``balloon'', i.e., there exist $\alpha \geq 0$, $\beta \geq 1$ and an enumeration $y_0, \dots, y_\alpha, \dots, y_{\alpha+\beta-1}$ of the elements of $Y$ such that, for $0 \leq i \leq \alpha - 1$, $y_i \preceq y_{i+1}$ and $y_{i+1} \not\preceq y_i$, and for $0 \leq j, j' \leq \beta-1$, $y_{\alpha+j} \equiv y_{\alpha+j'}$. In a picture:

\begin{center}
\begin{tikzpicture}
\filldraw (0,0) circle (1.5 pt) node[below] {$y_0$};
\node at (0.75, 0) {$\preceq$};
\filldraw (1.5,0) circle (1.5 pt) node[below] {$y_1$};
\node at (2.25, 0) {$\preceq$};
\filldraw (3,0) circle (1.5 pt) node[below] {$y_2$};
\node at (3.75, 0) {$\preceq$};
\node at (4.5,0) {$\dots$};
\node at (5.25, 0) {$\preceq$};
\filldraw (6,0) circle (1.5 pt) node[below] {$y_{\alpha-1}$};
\node at (6.75, 0) {$\preceq$};
\draw[thick] (9.2,-.2) ellipse (60pt and 20pt);
\filldraw (8,0) circle (1.5 pt) node[below] {$y_{\alpha}$};
\node at (8.5, 0) {$\equiv$};
\node at (9,0) {$\dots$};
\node at (9.5, 0) {$\equiv$};
\filldraw (10.2,0) circle (1.5 pt) node[below] {$y_{\alpha+\beta-1}$};
\end{tikzpicture}
\end{center}
\renewcommand{\a}{\alpha}
\renewcommand{\b}{\beta}
\renewcommand{\c}{\gamma}

As a convenient notation, we extend this enumeration of the elements of $Y$ to an infinite sequence, by 
defining $y_{\a+\c} := y_{\a + (\c \;\mathrm{mod}\; \b)}$ for any $\c \geq \b$. 

We now define a function $\sigma : \omega \to Y$ and a strictly increasing sequence of natural numbers $(\ell_i)_{i\in \omega}$ with the following properties:
\begin{enumerate}
\item $\sigma(n) {F} \sigma(n+1)$ for all $n$,
\item $\sigma(\ell_i) = y_i$ for all $i$.
\end{enumerate}
Let $\sigma(0) := y_0$ and $\ell_0 := 0$. Assume that $\ell_i$ and $\sigma(n)$ have been defined correctly for all $n \leq \ell_i$. Since $y_i \preceq y_{i+1}$, by Claim 1(vi), we have $y_i \leq_Y y_{i+1}$, so we may pick $m > 0$, $\sigma(\ell_i+1), \dots, \sigma(\ell_i+m) \in Y$ with $\sigma(\ell_i+m) = y_{i+1}$ and $\sigma(\ell_i+j) {F} \sigma(\ell_i+j+1)$ for all $0 \leq j < m$. Define $\ell_{i+1} := \ell_i + m$. Note that we may indeed arrange the choice so that $m > 0$: if $y_i \neq y_{i+1}$, this is automatic, and if $y_i = y_{i+1}$ then $\beta = 1$ and $i \geq \alpha$; in this case first choose any $\sigma(\ell_i + 1) \in Y$ so that $y_i {F} \sigma(\ell_i + 1)$, and then choose an $F$-path from $\sigma(\ell_i + 1)$ to $y_i = y_{i+1}$, which can be done since $\sigma(\ell_i + 1) \preceq y_i$ so $\sigma(\ell_i + 1) \leq_Y y_i$ by Claim 1(vi).

Finally, we define the valuation $\overline{p} \in \mathcal{P}(\omega)^N$ by setting, for each $1 \leq j \leq N$,
\[ p_j := \{ n \in \omega \ | \ \sigma(n) = q(x) \text{ for some } x \in a_j\}.\]

The fact that ($\star$) in the statement of Lemma~\ref{lem:filtration} is true for this choice of $\overline{p}$ will  follow from the following claim and the fact that $\sigma$ is surjective, by property (ii) of $\sigma$.\\

{\bf Claim 2.} For any $s \in \Gamma$,
\begin{align}\tag{$P_s$}
\text{for any } x \in X, n \in \omega, \text{ if } q(x) = \sigma(n) \text{ and } x \in s^A(\overline{a}), \text{ then } n \in s^{\mathcal{P}(\omega)}(\overline{p}).
\end{align}

{\it Proof of Claim 2.} By induction on $s$, which is in negation normal form by assumption.

($s = v_j$.) If $q(x) = \sigma(n)$ and $x \in a_j$ then $n \in p_j$ by definition.

($s = \neg v_j$.) If $q(x) = \sigma(n)$ and $x \in \neg v_j$, then $x \not\in a_j$. Since $v_j \in \Gamma$, for any $x'$ with $\sigma(n) = q(x')$, we have $x' \sim x$, so $x' \not\in a_j$. Hence, $n \not\in p_j$, so $n \in \neg p_j$.

($s = \I$.) If $q(x) = \sigma(n)$ and $x \in \I$ then $x = x_0$. Then $\sigma(n) = q(x_0) = y_0$, since $q(x_0) \preceq q(x)$ for all $x$ by Claim 1(x). If we would have $n > 0$, then we would get $\sigma(n-1) F y_0$, which is impossible by Claim 1(x). So $n = 0 \in \I$.

($s = \neg \I$.) If $q(x) = \sigma(n)$ and $x \in \neg \I$ then $x \neq x_0$. By Claim 1(ix), $q(x) \neq y_0$. Thus, $\sigma(n) \neq y_0$, so in particular $n \neq 0$, so $n \in \neg I$.

($s = s_1 \vee s_2$) and ($s = s_1 \wedge s_2$) are straight-forward.

($s = \N t$.) Suppose that $q(x) = \sigma(n)$ and $x \in \N t$. Since $\sigma(n) {F} \sigma(n+1)$, pick $x'$ such that $q(x') = \sigma(n) = q(x)$ and $q(f(x')) = \sigma(n+1)$. Since $x \in \N t$, $\N t \in \Gamma$, and $q(x) = q(x')$, we have $x' \in \N t$, so $f(x') \in t$. By the induction hypothesis $(P_t)$, we get $n + 1 \in t$, so $n \in \N t = s$.

($s = \Box t$.) Suppose that $q(x) = \sigma(n)$ and $x \in \Box t$. Let $m \geq n$ be arbitrary, and pick $x'$ such that $q(x') = \sigma(m)$. It follows from property (i) of $\sigma$ that $q(x) = \sigma(n) \leq_Y \sigma(m) = q(x')$. 
By Claim 1(iv), we have $x' \in t$. Applying the induction hypothesis $(P_t)$ to $x'$ and $m$, we obtain $m \in t$. Since $m$ was arbitrary, we conclude that $n \in \Box t$.

($s = \diam t$.) Suppose that $q(x) = \sigma(n)$ and $x \in \diam t$. 
We need to prove that $n \in \diam t$, i.e., that there exists $k \geq 0$ such that $n + k \in t$.
We first prove that, {\it for any $k$, if $n + i \not\in t$ for all $0 \leq i \leq k$, then $q^{-1}(\sigma(n+k)) \subseteq \diam t$}. For $k = 0$, this is clear, because $x \in \diam t \in \Gamma$. For the induction step, if the statement holds for some $k$, assume that $q(x') = \sigma(n+k+1)$ and $n+i \not\in t$ for all $0 \leq i  \leq k + 1$. Pick some $x''$ such that $q(x'') = \sigma(n+k)$. Then $x'' \in \diam t$ by the statement for $k$, and $x'' \not\in t$, for otherwise ($P_t$) would give $n+k \in t$.
Therefore, by Def.~\ref{dfn:LTLIalgebra}(i), $x'' \in \X\diam t$, and $\X \diam t \in \Gamma$ since $\Gamma$ is filterable. By Claim 1(ii) and the fact that $q(x'') = \sigma(n+k) {F} \sigma(n+k+1) = q(x')$, we obtain $x' \in \diam t$.

Now, since $\sigma$ visits $y_\a$ infinitely often, pick $\ell \geq 0$ such that $\sigma(n+\ell) = y_\a$. If $n + i \in t$ for some $0 \leq i \leq \ell$, then we are done. Otherwise, pick some $u \in X$ such that $q(u) = y_\a$; by the previous paragraph, $u \in \diam t$. By Claim 1(vii), pick $w \in X$ such that $y_\a \preceq q(w)$ and $w \in t$. Then $q(w) = y_{\a + j}$ for some $0 \leq j \leq \b - 1$. Since $\sigma$ visits $y_{\a + j}$ infinitely often, pick $k \geq \ell$ such that $\sigma(n+k) = y_{\a + j} = q(w)$. Then $n + k \in t$ by ($P_t$).
\end{proof}

We are now in a position to prove Theorem~\ref{thm:completeness}, which, by the remarks preceding it, finishes the proof that $T^*$ is a companion of $T$.

\begin{proof}[Proof of Theorem~\ref{thm:completeness}]
Let $t(v_1,\dots,v_N)$ be any $\mathcal{L}$-term, and assume that $\mathcal{P}(\omega) \models t = \btop$. By Lemma~\ref{lem:NNF}, let $u$ be an NNF $\mathcal{L}$-term that is equivalent to $\neg t$. Let $\mathrm{Sub}(u)$ denote the set of subterms of the term $u$, and let $\Gamma$ be the filterable set
\[ \Gamma := \{s \ | \ s \in \mathrm{Sub}(u)\} \cup \{ \N \diam s \ | \ \diam s \in \mathrm{Sub}(u)\} \cup \{ \N \Box s \ | \ \Box s \in \mathrm{Sub}(u)\} \cup \{I\}.\]
Indeed, $\Gamma$ is finite, and any element of $\Gamma$ is in negation normal form.

Let $A$ be a \LTLI-algebra and let $\overline{a} \in A^N$ be arbitrary; we prove that $t^A(\overline{a}) = \btop^A$. By Theorem~\ref{thm:duality}, there exists a \LTLI-space $(X,\leq,f,x_0)$ with dual algebra (isomorphic to) $A$. Choose $\overline{p} \in \mathcal{P}(\omega)^n$ with property ($\star$) of Lemma~\ref{lem:filtration}.
Let $x \in X$ be arbitrary. Pick $n_x \in \omega$ as in Lemma~\ref{lem:filtration}. Then $n_x \in t^{\mathcal{P}(\omega)}(\overline{p})$, since $\mathcal{P}(\omega) \models t = \btop$, so $n_x \not\in (\neg t)^{\mathcal{P}(\omega)}(\overline{p}) = u^{\mathcal{P}(\omega)}(\overline{p})$. Hence, since $u \in \Gamma$, we also have $x \not\in u^A(\overline{a}) = (\neg t)^A(\overline{a})$, by the property (\ref{eq:star}) in Lemma~\ref{lem:filtration}. Therefore, $x \in t^A(\overline{A})$. Since $x$ was arbitrary, it follows that $t^A(\overline{a}) = X = \btop^A$.
\end{proof}

\section{$T^*$ is model complete}\label{sec:modelcomplete}
Let $\phi$ be a first-order formula in the signature of \LTLI-algebras. We need to show that $\phi$ is equivalent to an existential formula $\phi'$ in $T^*$, the first-order theory of $\mathcal{P}(\omega)$. We will proceed according to the following scheme.

\begin{enumerate}
\item Syntactically transform $\phi$ into a formula $\widehat{\phi}$ of the monadic second-order logic S1S (Proposition~\ref{prop:fomso}).
\item Associate to the S1S-formula $\widehat{\phi}$ a B\"uchi automaton $\mathcal{A}_{\widehat{\phi}}$ (Theorem~\ref{thm:buchi}).
\item Associate to the B\"uchi automaton $\mathcal{A}_{\widehat{\phi}}$ a \LTLI-term $t_{\mathcal{A}_{\widehat{\phi}}}$ representing it (Proposition~\ref{prop:exfo}).
\item Use an appropriate renaming of variables to obtain an existential formula $\phi'$ equivalent to $\phi$ (Subsection~\ref{subsec:proof}).
\end{enumerate}
We now perform each of the steps in Subsections~\ref{subsec:S1S}--\ref{subsec:proof} below.

\subsection{From first-order to S1S}\label{subsec:S1S}

Let us recall a definition of the syntax and semantics of the monadic second-order logic S1S (see, e.g., \cite[Ch. 12]{GTW2002} for more details and background).

\begin{definition}\label{dfn:S1S}
The set $\SF{\V}$ of \emph{S1S-formulae} with free variables contained in a set $\mathcal{V}$ is the smallest set such that:
\begin{enumerate}
\item for each $P, Q \in \V$, the formulae $P \subseteq Q$ and $S(P,Q)$ are in $\SF{\V}$,
\item if $\phi$ and $\psi$ are in $\SF{\V}$, then $\phi \vee \psi$ and $\neg \phi$ are in $\SF{\V}$,
\item if $\phi$ is in $\SF{\V}$ and $R$ is in $\mathcal{V}$ then $\exists R \phi$ is in $\SF{\V\setminus\{R\}}$.
\end{enumerate}
The \emph{satisfaction relation} ${\models} \subseteq \mathcal{P}(\omega)^\V \times \SF{\V}$ is defined inductively by
\begin{enumerate}
\item $v \models P \subseteq Q$ if, and only if, $v(P) \subseteq v(Q)$,\\
$v \models S(P,Q)$ if, and only if, there exists $n \in P$ such that $n+1 \in Q$,
\item $v \models \phi \vee \psi$ if, and only if, $v \models \phi$ or $v \models \psi$,\\
$v \models \neg \phi$ if, and only if, $v \models \phi$ does not hold,
\item $v \models \exists R \phi$ if, and only if, there exists 
a variant $v'$ of $v$, differing from $v$ only for the value of $R$, 
such that $v' \models \phi$.
\end{enumerate}
\end{definition}

Recall \cite[Def. 12.5]{GTW2002} that a quantification over an individual variable, $\exists x \phi$, can be encoded in S1S as a quantification $\exists x (\mathsf{sing}(x) \wedge \phi)$, where $\mathsf{sing}(x)$ is an S1S-formula expressing that $v(x)$ must be a singleton for any valuation $v$. We use the convention that lowercase letters $x$, $y$, $\dots$ are individual variables, while capital letters $P$, $Q$, $R$, $\dots$ are set variables. We will also make use of the standard abbreviations $x \in P$, $P = Q$, $x \leq y$, $S(x,y)$, as in \cite[Ch. 12]{GTW2002}. \\

{\bf Notation.} In order to avoid confusion, we need to distinguish the function symbols $\vee$, $\neg$, $\bot$ in the first-order signature $\mathcal{L}$ of \LTLI-algebras from the symbols $\vee$, $\neg$, $\bot$ that occur as connectives in first-order $\mathcal{L}$-formulae and in S1S-formulae. Therefore, throughout this section, we use an alternative first-order signature $\mathcal{L}^* := \{\cup,-,\emptyset,\diam,\X,\I\}$ for \LTLI-algebras, as well as abbreviations $a \cap b := -(-a \cup -b)$ and $a \Rightarrow b := -a \cup b$.\\

We now come to the translation from first-order $\mathcal{L}^*$-formulas to formulas in the logic S1S. This translation is a variant of the so-called {\it standard translation} of modal logic into monadic second-order logic, cf., e.g., \cite[Prop. 3.12]{BRV2001}.

\begin{proposition}\label{prop:fomso}
For any first-order $\mathcal{L}^*$-formula $\phi(p_1,\dots,p_n)$, there exists an S1S-formula $\widehat{\phi}(P_1,\dots,P_n)$ such that, for any $v \in \mathcal{P}(\omega)^n$,\footnote{Note that the symbol $\models$ is used with two different meanings in (\ref{eq:fomso}): on the left-hand-side, it denotes the usual satisfaction relation of first-order logic, while on the right-hand-side it denotes the satisfaction relation of S1S of Definition~\ref{dfn:S1S}. Similarly, the tuple $v \in \mathcal{P}(\omega)^n$ is regarded on the left as a valuation of the first-order variables $p_1,\dots,p_n$, and on the right as a valuation of the second-order variables $P_1,\dots,P_n$.}
\begin{equation}\label{eq:fomso}
\mathcal{P}(\omega), v \models \phi \text{ if, and only if, } v \models \widehat{\phi}.
\end{equation}
\end{proposition}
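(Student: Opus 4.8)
The plan is to prove Proposition~\ref{prop:fomso} by induction on the structure of the first-order $\mathcal{L}^*$-formula $\phi$, following the pattern of the standard translation of modal logic into monadic second-order logic. The key conceptual point is that a variable $p_i$ of the first-order theory, whose intended interpretation is a subset of $\omega$, corresponds directly to a set variable $P_i$ of S1S; thus a valuation $v \in \mathcal{P}(\omega)^n$ can be read on both sides of (\ref{eq:fomso}) without any change of data. The induction will produce $\widehat{\phi}$ by translating each first-order connective and quantifier into its S1S counterpart, so that the only genuine work is in translating atomic formulas, which are equations $t_1 = t_2$ between $\mathcal{L}^*$-terms.

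First I would handle the term level. For each $\mathcal{L}^*$-term $t(p_1,\dots,p_n)$ and each fresh individual variable $x$, I would define by induction on $t$ an S1S-formula $\tau_t(x,P_1,\dots,P_n)$ with the property that, for any $v \in \mathcal{P}(\omega)^n$ and $m \in \omega$, we have $m \in t^{\mathcal{P}(\omega)}(v)$ if, and only if, $v[x \mapsto m] \models \tau_t(x)$. The base case $t = p_i$ is $\tau_{p_i}(x) := x \in P_i$, and $t = \emptyset$ is $\tau_\emptyset(x) := \neg(x = x)$ (or any unsatisfiable formula). The Boolean clauses are immediate: $\tau_{-s}(x) := \neg\tau_s(x)$ and $\tau_{s_1 \cup s_2}(x) := \tau_{s_1}(x) \vee \tau_{s_2}(x)$. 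The modal clauses use the definitions of the operators on $\mathcal{P}(\omega)$: since $\X S = \{m \mid m+1 \in S\}$, I set $\tau_{\X s}(x) := \exists y\,(S(x,y) \wedge \tau_s(y))$; since $\diam S = \{m \mid m \leq s \text{ for some } s \in S\}$, I set $\tau_{\diam s}(x) := \exists y\,(x \leq y \wedge \tau_s(y))$; and since $\I = \{0\}$, I set $\tau_{\I}(x) := \forall y\,(x \leq y)$, expressing that $x$ is the minimum. Each right-hand side is expressible in S1S via the standard abbreviations $x \leq y$, $S(x,y)$, and $\mathsf{sing}(x)$, so $\tau_t$ is a legitimate S1S-formula with free individual variable $x$ and free set variables among $P_1,\dots,P_n$.

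With the term translation in hand, an atomic $\mathcal{L}^*$-formula $t_1 = t_2$ translates to the S1S-formula $\forall x\,(\tau_{t_1}(x) \liff \tau_{t_2}(x))$, which says exactly that $t_1^{\mathcal{P}(\omega)}(v)$ and $t_2^{\mathcal{P}(\omega)}(v)$ contain the same natural numbers, i.e., are equal as sets. I would then complete the induction on $\phi$: the propositional connectives $\vee$ and $\neg$ translate to themselves (clause (ii) of Definition~\ref{dfn:S1S}), and a first-order quantifier $\exists p_i\,\psi$, ranging over arbitrary subsets of $\omega$, translates to the set quantifier $\exists P_i\,\widehat{\psi}$ (clause (iii)). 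The correctness of each step, i.e., the biconditional (\ref{eq:fomso}), follows directly from the semantics of S1S in Definition~\ref{dfn:S1S} together with the term-level invariant established above, since the satisfaction clauses for $\vee$, $\neg$, and $\exists$ match the Tarskian clauses for the corresponding first-order constructs verbatim.

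I do not expect any serious obstacle here; this is a routine, if somewhat tedious, structural induction. The only points requiring mild care are ensuring that the individual variables introduced inside the term translations (the $x$ and the existentially bound $y$) are chosen fresh so as to avoid variable capture, and verifying that the operations $\diam$, $\X$, and $\I$ on $\mathcal{P}(\omega)$ really are captured by the chosen S1S-formulas, which is immediate from their definitions in Section~\ref{sec:statement}. The main conceptual content of the argument is simply the observation that the first-order structure $\mathcal{P}(\omega)$ and the second-order structure underlying S1S share the same universe of subsets of $\omega$, so that first-order quantification over elements of $\mathcal{P}(\omega)$ is precisely second-order (set) quantification in S1S.
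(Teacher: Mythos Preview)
Your proposal is correct and follows essentially the same approach as the paper: an inductive term translation $\tau_t(x)$ (the paper writes $t^\bullet(x)$) with identical clauses for each symbol of $\mathcal{L}^*$, followed by translating atomic equations $t_1 = t_2$ as $\forall x\,(\tau_{t_1}(x)\leftrightarrow\tau_{t_2}(x))$ and passing connectives and quantifiers through verbatim. The paper's proof is slightly terser but matches yours step for step.
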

\begin{proof}
We first inductively define, for any $\mathcal{L}^*$-term $t(p_1,\dots,p_n)$, an S1S-formula $t^\bullet(P_1,\dots,P_n,x)$, where $x$ is a fresh individual variable, as follows:
\begin{itemize}
\item $(p_i)^\bullet := x \in P_i$,
\item $(t \cup u)^\bullet := t^\bullet(x) \vee u^\bullet(x)$,
\item $(-t)^\bullet := \neg (t^\bullet)(x)$,
\item $\emptyset^\bullet := \neg(x = x)$,
\item $(\diam t)^\bullet := \exists y (x \leq y \wedge t^\bullet(P_1,\dots,P_n,y))$,
\item $(\X t)^\bullet := \exists y (S(x,y) \wedge t^\bullet(P_1,\dots,P_n,y))$,
\item $\I^\bullet := \forall z (x \leq z).$
\end{itemize}
Note that, for any $i \in \omega$ and $v \in \mathcal{P}(\omega)^n$, we have
\begin{equation}\label{eq:termeq}
i \in t^{\mathcal{P}(\omega)}(v(1),\dots,v(n)) \text{ if, and only if, } v \models t^\bullet(v(1),\dots, v(n),i).
\end{equation}
Now, for any $\mathcal{L}^*$-formula $\phi(p_1,\dots,p_n)$, define $\widehat{\phi}(P_1,\dots,P_n)$ by replacing any atomic formula $t = u$ by $\forall x (t^\bullet(P_1,\dots,P_n,x) \leftrightarrow u^\bullet(P_1,\dots,P_n,x))$, and any occurrence of $\exists p$ by $\exists P$. The claimed equivalence is proved by an easy induction on the complexity of $\phi$, using (\ref{eq:termeq}) for the base case.
\end{proof}

\subsection{From S1S-formula to B\"uchi automaton}\label{subsec:automata}
We briefly recall the definition of finite automata and the B\"uchi acceptance condition. See, e.g., \cite[Ch.~1]{GTW2002} for more details.
\begin{definition}\label{dfn:buchi}
Let $\Sigma$ be a finite alphabet. A (finite, non-deterministic) \emph{automaton} is a tuple $\mathcal{A} = (Q,\delta,q_0,F)$ where $Q$ is a finite set, whose elements are called \emph{states}, $q_0 \in Q$ is a distinguished element called \emph{the initial state}, $F \subseteq Q$ is a set, whose elements are called \emph{final states}, and $\delta \subseteq Q \times \Sigma \times Q$ is a ternary relation, called the \emph{transition relation}. For an infinite word $w \in \Sigma^\omega$, a function $\rho \colon \omega \to Q$ is called a \emph{run of $\mathcal{A}$ on $w$} if $\rho(0) = q_0$ and, for any $i \in \omega$, $(\rho(i),w(i),\rho(i+1)) \in \delta$. For any word $w \in \Sigma^\omega$, the automaton $\mathcal{A}$ \emph{B\"uchi-accepts} $w$ if, and only if, there exist a run $\rho$ of $\mathcal{A}$ on $w$ and a final state $q_f \in F$ such that the set $\{ i \in \omega \ | \ \rho(i) = q_f\}$ is infinite.
\end{definition}

We next recall how valuations of variables naturally define infinite words.

\begin{definition}\label{dfn:wv}
Let $n \geq 0$ and $\Sigma := \mathcal{P}(\{1,\dots,n\})$. For any $v \in \mathcal{P}(\omega)^n$, we define the infinite word $w_v \in \Sigma^\omega$ by
\[ w_v(i) := v^{-1}(\{ u \in \Sigma \ | \ i \in u\}) = \{k \ | \ i \in v(k) \}, \quad i \in \omega.\]
\end{definition}

The following theorem, originally due to B\"uchi \cite{Buc1962}, states that any S1S-formula can be converted into an automaton.
\begin{theorem}\label{thm:buchi}
For any S1S-formula $\psi(P_1,\dots,P_n)$, there exists an automaton $\mathcal{A}_\psi$ on the finite alphabet $\Sigma := \mathcal{P}(\{1,\dots,n\})$ such that, for any $v \in \mathcal{P}(\omega)^n$,
\begin{equation} \label{eq:buchi}
v \models \psi  \text{ if, and only if, } \mathcal{A}_\psi \text{ B\"uchi-accepts } w_v,
\end{equation}
\end{theorem}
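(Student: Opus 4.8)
The plan is to prove the statement by induction on the structure of the S1S-formula $\psi$, following the classical argument of Büchi. The starting observation is that the assignment $v \mapsto w_v$ of Definition~\ref{dfn:wv} is a \emph{bijection} between $\mathcal{P}(\omega)^n$ and $\Sigma^\omega$, for $\Sigma = \mathcal{P}(\{1,\dots,n\})$: its inverse sends a word $w$ to the valuation $k \mapsto \{ i \in \omega \mid k \in w(i)\}$. Under this bijection I identify a formula $\psi(P_1,\dots,P_n)$ with the $\omega$-language $L_\psi := \{ w_v \mid v \models \psi\} \subseteq \Sigma^\omega$, and the theorem becomes the assertion that every $L_\psi$ is Büchi-recognizable, i.e.\ equals the set of words Büchi-accepted by some automaton. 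To keep alphabets matching across the induction, I would in fact prove the uniform statement that for every finite set $\V$ of variables containing the free variables of $\psi$, the language $\{w_v \mid v \in \mathcal{P}(\omega)^\V,\ v \models \psi\}$ over $\mathcal{P}(\V)$ is Büchi-recognizable; this absorbs all bookkeeping for variables that do not occur in a given subformula, and instantiating $\V = \{P_1,\dots,P_n\}$ recovers the theorem. The three syntactic operations of S1S then correspond to three closure properties: disjunction to union, existential quantification to projection, and negation to complementation (which is meaningful as complementation within $\Sigma^\omega$ precisely because $v\mapsto w_v$ is a bijection, so the complement of $L_\phi$ is $L_{\neg\phi}$).

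For the base cases I would exhibit small automata directly. Since $i \in w_v(k)$ if, and only if, $k \in v(i)$, the atomic formula $P_i \subseteq P_j$ expresses the safety condition ``$i \in w_v(k)$ implies $j \in w_v(k)$ for every position $k$''; this is recognized by an automaton with a single accepting state looping on every letter $u$ such that $i \in u$ implies $j \in u$, together with a non-accepting sink state for the remaining letters. The atomic formula $S(P_i,P_j)$ expresses the existence of a position $n$ with $i \in w_v(n)$ and $j \in w_v(n+1)$, and is recognized by an automaton that loops in its non-final initial state, nondeterministically guesses such a position, verifies the successor letter, and thereafter loops forever in an accepting state.

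For the inductive steps, disjunction is handled by the standard disjoint-union construction witnessing closure under union, and existential quantification $\exists R\,\phi$ is handled by projection: applying the induction hypothesis to $\phi$ over $\V \cup \{R\}$ yields an automaton $\mathcal{A}_\phi$ over $\mathcal{P}(\V \cup \{R\})$, and relabelling each transition letter $u$ by its restriction $u \cap \V$ produces a nondeterministic automaton over $\mathcal{P}(\V)$. A run of the relabelled automaton on $w_v$ corresponds exactly to a choice of the value of $v(R)$ making the extended valuation satisfy $\phi$, which is precisely the semantics of $\exists R$; hence the relabelled automaton recognizes $L_{\exists R\,\phi}$.

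The genuinely hard case, which I expect to be the main obstacle, is negation, which requires closure of Büchi-recognizable languages under complementation. This is the deep content of Büchi's theorem and does not follow from any elementary automaton manipulation: unlike the finite-word case, nondeterministic Büchi automata cannot in general be determinized, so complementation must be obtained by other means. The classical route is a Ramsey-theoretic argument: one introduces a congruence of finite index on finite words recording, for each ordered pair of states, whether there is a run between them and whether such a run can be taken through a final state; Ramsey's theorem then decomposes any infinite word into a prefix followed by an infinite concatenation of blocks lying in a single congruence class, reducing both acceptance and non-acceptance to a finite combinatorial condition on congruence classes that is itself Büchi-recognizable. Since we are permitted the free use of results from the literature, I would invoke Büchi's complementation theorem directly (see \cite{Buc1962} and the standard treatment in \cite{GTW2002}) rather than reprove it, and thereby close the induction; the required equivalence~(\ref{eq:buchi}) then follows since $v \models \psi$ if, and only if, $w_v \in L_\psi$, if, and only if, $\mathcal{A}_\psi$ Büchi-accepts $w_v$.
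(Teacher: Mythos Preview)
Your sketch is correct and is precisely the classical inductive argument due to B\"uchi; the paper, however, does not reprove this at all but simply cites it as a known result (\cite[Thm.~12.15]{GTW2002}, \cite[Thm.~5.9]{Tho1996}). So your proposal is more detailed than what the paper provides, but ultimately both rest on the same external reference, since you too invoke the literature for the complementation step.
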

\begin{proof}
See, e.g., \cite[Thm. 12.15]{GTW2002} or \cite[Thm. 5.9]{Tho1996}. 
\end{proof}

\subsection{From automata to existential formulae}\label{subsec:exform}
It is well-known that a B\"uchi automaton can be transformed into a formula of S1S which starts with a block of existential monadic set quantifiers, but also has some quantifications over individual variables after that. As we remarked in the introduction to the paper, we require slightly more, namely that the quantifications over individual variables are not needed in the signature $\mathcal{L}^*$.

In order to state precisely the translation back from a B\"uchi automaton to an existential $\mathcal{L}^*$-formula, we need to recall how infinite words in $\Sigma^\omega$ yield valuations of $\Sigma$ in $\mathcal{P}(\omega)$, by the reverse process to Definition~\ref{dfn:wv}.
\begin{definition}\label{dfn:vw}
Let $\Sigma$ be a finite alphabet. For any $w \in \Sigma^\omega$, we define the valuation $v_w : \Sigma \to \mathcal{P}(\omega)$ by
\[v_w(a) := w^{-1}(a) = \{i \in \omega : w(i) = a\}, \quad a \in \Sigma.\]
\end{definition}

Let $\mathcal{A} = (Q,\delta,q_0,F)$ be an automaton on a finite alphabet $\Sigma$. We now define a number of $\mathcal{L}^*$-terms with variables from the set $Q \cup \Sigma$. First define the $\mathcal{L}^*$-terms:
\begin{itemize}
\item $\mathsf{Init} := \I \Rightarrow q_0$,
\item $\mathsf{Trans} := \bigcap_{q \in Q} \left( q \Rightarrow \bigcup_{(q,a,q') \in \delta} (a \cap \X q') \right)$,
\item $\mathsf{Part} := \bigcup_{q \in Q} (q \cap \bigcap_{\substack{q' \in Q\\ q' \neq q}} -q')$,
\item $\mathsf{Accept} := \bigcup_{q \in F} \diam q$,
\end{itemize}
and define the $\mathcal{L}^*$-term $t_{\mathcal{A}}$ by:
\begin{equation}\label{eq:termdef}
t_\mathcal{A}(\overline{a}, \overline{q}) :=  \mathsf{Part} \cap \mathsf{Init} \cap \mathsf{Trans}  \cap \mathsf{Accept}.
\end{equation}

\begin{proposition}\label{prop:exfo}
Let $\mathcal{A}$ be an automaton on a finite alphabet $\Sigma$. For any $w \in \Sigma^\omega$,
\begin{equation}\label{eq:exfo}
 \mathcal{A} \text{ B\"uchi-accepts } w \text{ if, and only if, } \mathcal{P}(\omega), v_w \models \exists \overline{q} \, (t_{\mathcal{A}} = \top).
\end{equation}
\end{proposition}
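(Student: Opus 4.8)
The plan is to read off both sides of (\ref{eq:exfo}) as statements about a single function $\rho\colon\omega\to Q$, and to show that the four conjuncts of $t_\mathcal{A}$ encode precisely the defining clauses of an accepting run. First I would note that, since the alphabet variables $\overline a$ are fixed to the valuation $v_w$, for every position $i\in\omega$ and letter $a\in\Sigma$ we have $i\in a^{\mathcal{P}(\omega)}(v_w)$ if, and only if, $w(i)=a$; this is immediate from Definition~\ref{dfn:vw}. The heart of the argument is then the dictionary between valuations of the state variables $\overline q$ and candidate runs: given $\rho\colon\omega\to Q$, set $q^{\mathcal P(\omega)}:=\rho^{-1}(q)=\{i\in\omega\mid\rho(i)=q\}$ for each $q\in Q$; conversely, any valuation of $\overline q$ for which $\mathsf{Part}=\top$ holds in $\mathcal P(\omega)$ assigns to each position exactly one state, and hence determines such a function $\rho$. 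Thus I would first check that $\mathsf{Part}=\top$ holds in $\mathcal P(\omega)$ if, and only if, the sets $(q^{\mathcal P(\omega)})_{q\in Q}$ partition $\omega$, i.e.\ the valuation arises from some $\rho$ as above.

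Next I would compute the semantics of the remaining three conjuncts under such a $\rho$. Recalling $\I^{\mathcal P(\omega)}=\{0\}$, the term $\mathsf{Init}=\I\Rightarrow q_0$ evaluates to $\top$ exactly when $0\in q_0^{\mathcal P(\omega)}$, that is, $\rho(0)=q_0$. For $\mathsf{Trans}$, using that $\X S=\{n\mid n+1\in S\}$ together with the remark on $\overline a$ above, a position $i$ lies in $\mathsf{Trans}^{\mathcal P(\omega)}$ if, and only if, writing $q=\rho(i)$, there is a triple $(q,a,q')\in\delta$ with $w(i)=a$ and $\rho(i+1)=q'$; hence $\mathsf{Trans}=\top$ holds if, and only if, $(\rho(i),w(i),\rho(i+1))\in\delta$ for every $i\in\omega$. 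Together with the previous step, this says precisely that $\rho$ is a run of $\mathcal A$ on $w$ in the sense of Definition~\ref{dfn:buchi}.

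It remains to treat the acceptance condition, which is the one point requiring a small extra observation. Since $\diam S=\{n\mid n\le s\text{ for some }s\in S\}$, a position $n$ belongs to $\mathsf{Accept}^{\mathcal P(\omega)}=\bigcup_{q\in F}\diam q$ if, and only if, there exist $q\in F$ and $s\ge n$ with $\rho(s)=q$. Consequently $\mathsf{Accept}=\top$ (i.e.\ this holds for all $n$) if, and only if, the set $\{s\in\omega\mid\rho(s)\in F\}$ is unbounded in $\omega$; as $\omega$ is linearly ordered with no greatest element, unboundedness is equivalent to infiniteness, and since $F$ is finite this is in turn equivalent to the existence of a single $q_f\in F$ occurring infinitely often along $\rho$---which is exactly the B\"uchi acceptance condition. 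Assembling the four equivalences, $\mathcal P(\omega),v_w\models t_\mathcal A=\top$ under the valuation corresponding to $\rho$ if, and only if, $\rho$ is an accepting run of $\mathcal A$ on $w$; prefixing the quantifier $\exists\overline q$ and applying the dictionary between valuations and functions $\rho$ then yields (\ref{eq:exfo}). The only genuine subtlety is this last \emph{unbounded $\Leftrightarrow$ infinite $\Leftrightarrow$ some final state infinitely often} step for $\mathsf{Accept}$; everything else is a direct unwinding of the definitions of the operations of $\mathcal P(\omega)$.
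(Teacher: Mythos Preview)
Your argument is correct and is precisely the unwinding of definitions that the paper summarizes as ``Clear from the definitions.'' There is no difference in approach: you have simply made explicit the dictionary between valuations of $\overline q$ and runs $\rho$, and verified that each conjunct of $t_{\mathcal A}$ corresponds to the intended clause of B\"uchi acceptance.
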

\begin{proof}
Clear from the definitions.
\end{proof}

\subsection{Proof that $T^*$ is model complete}\label{subsec:proof}
Let $\phi(p_1,\dots,p_n)$ be a first-order $\mathcal{L}^*$-formula. Let $\widehat{\phi}(P_1,\dots,P_n)$ be the S1S-formula given by Proposition~\ref{prop:fomso}. Let $\mathcal{A}_{\widehat{\phi}}$ be the automaton on the alphabet $\Sigma := \mathcal{P}(1,\dots,n) = \{a_1,\dots,a_{2^n}\}$ given by Theorem~\ref{thm:buchi}. Let $t_{\mathcal{A}_{\widehat{\phi}}}$ be the $\mathcal{L}^*$-term defined in (\ref{eq:termdef}). We define the existential $\mathcal{L}^*$-formula
\[ \phi'(p_1,\dots,p_n) :=
\exists \overline{a} \, \exists \overline{q} \left(
(t_{\mathcal{A}_{\widehat{\phi}}} = \top) \wedge
\bigwedge_{\ell=1}^{2^n} \left(a_\ell = \bigcap_{k \in a_\ell} p_k \cap \bigcap_{k \not\in a_\ell} -p_k \right) 
 \right).\]
\\

{\bf Claim.} $T^* \proves \phi \leftrightarrow \phi'$.

\begin{proof}
It suffices to prove that $\mathcal{P}(\omega), v \models \phi$ if, and only if, $\mathcal{P}(\omega), v \models \phi'$, for any $v \in \mathcal{P}(\omega)^n$.
Let $v \in \mathcal{P}(\omega)^n$ be arbitrary. We obtain:
\begin{align*}
\mathcal{P}(\omega), v \models \phi &\iff v \models \widehat{\phi} \quad &\text{(by (\ref{eq:fomso}) in Prop.~\ref{prop:fomso})}\\
&\iff \mathcal{A}_{\widehat{\phi}} \text{ accepts } w_v \quad &\text{(by (\ref{eq:buchi}) in Thm.~\ref{thm:buchi})}\\
&\iff \mathcal{P}(\omega), v_{w_v} \models \exists \overline{q} \, (t_{\mathcal{A}_{\widehat{\phi}}} = \top)\quad &\text{(by (\ref{eq:exfo}) in Prop.~\ref{prop:exfo})}\\
&\iff \mathcal{P}(\omega), v \models \phi',
\end{align*}
where the last equivalence follows from the fact that, for each $a \in \Sigma = \mathcal{P}(1,\dots,n)$, we have
\[ v_{w_v}(a) = \{i \in \omega : a = \{k : i \in v(k)\}\} = \bigcap_{k \in a} v(k) \cap \bigcap_{k \not\in a} -v(k).\]
\end{proof}

\section{Conclusions and future work}\label{sec:conclusions}
In this paper, we characterized monadic second order logic on infinite words as the model companion of the universal class of \LTLI-algebras. This is not a stand-alone result; we indicate a few of the possible directions of further research here.

First, we expect that it is possible to give a similar characterization of monadic second order logic on finite words, by adjusting the axiomatization of \LTLI-algebras in the appropriate manner. A harder, but equally natural question, is whether a version of our result holds for monadic second order logics on tree structures, such as S2S, or finite trees.

Our result (and/or its incarnation for finite words) is likely to be related to the duality theory for regular languages and logics that is being developed in a series of papers including \cite{GGP2008,GKP2014}. It would be interesting to make these connections explicit, in order to determine if the two approaches can benefit from each other.

Riba \cite{Rib2012} gives a model-theoretic proof of the completeness of Siefkes' axiomatization of S1S. Our result in this paper entails that S1S, viewed as a first-order theory, coincides with the theory of the existentially closed \LTLI-algebras \cite[Thm.~B]{Whe1976}. Therefore, we suggest that an alternative axiomatization of S1S, and completeness proof for it, could be sought by axiomatizing  existentially closed \LTLI-algebras. In a similar direction, we note that Gheerbrant and Ten Cate \cite{GheCat2012} used modal logic techniques to axiomatize monadic second order logic on finite trees, and fragments. An extension of our results in this paper to finite trees could also be connected to the results in \cite{GheCat2012}.

\subsection*{Acknowledgement}
We thank the referee for their valuable comments on an earlier version of this paper.


\bibliographystyle{asl}
\bibliography{ghilardivangool2015-revised}

\end{document}